\documentclass[11pt, a4paper]{amsart}
\usepackage{amsmath}
\usepackage{amssymb}
\usepackage{color}

\setlength{\textwidth}{16cm} \setlength{\textheight}{21.5cm}
\setlength{\oddsidemargin}{0.0cm} \setlength{\evensidemargin}{0.0cm}

\parskip 4pt

\newtheorem{prop}{Proposition}[section]
\newtheorem{teo}{Theorem}[section]
\newtheorem{lema}{Lemma}[section]

\theoremstyle{definition}

\def\ep{\varepsilon}

\begin{document}
\title[Near field limit for the PME on the half-line]{Near field asymptotic behavior for the porous medium equation on the half-line}

\author[Cort\'{a}zar, Elgueta, Quir\'{o}s \and Wolanski]{C. Cort\'{a}zar,  F. Quir\'{o}s \and N. Wolanski}

\address{Carmen Cort\'{a}zar\hfill\break\indent
Departamento  de Matem\'{a}tica, Pontificia Universidad Cat\'{o}lica
de Chile \hfill\break\indent Santiago, Chile.} \email{{\tt
ccortaza@mat.puc.cl} }

\address{Fernando Quir\'{o}s\hfill\break\indent
Departamento  de Matem\'{a}ticas, Universidad Aut\'{o}noma de Madrid
\hfill\break\indent 28049-Madrid, Spain.} \email{{\tt
fernando.quiros@uam.es} }

\address{Noemi Wolanski \hfill\break\indent
Departamento  de Matem\'{a}tica, FCEyN,  UBA,
\hfill\break \indent and
IMAS, CONICET, \hfill\break\indent Ciudad Universitaria, Pab. I,\hfill\break\indent
(1428) Buenos Aires, Argentina.} \email{{\tt wolanski@dm.uba.ar} }

\thanks{C. Cort\'azar supported by  FONDECYT grant 1150028 (Chile). F. Quir\'os supported by
project MTM2014-53037-P (Spain). N. Wolanski supported by
CONICET PIP625, Res. 960/12, ANPCyT PICT-2012-0153, UBACYT X117 and MathAmSud 13MATH03 (Argentina).}

\keywords{Porous medium equation, half-line, asymptotic behavior,
matched asymptotics.}

\subjclass[2010]{%
35B40, 
35K65, 
35R35. 
}

\date{}

\begin{abstract}
Kamin and V\'azquez proved in 1991 that solutions to the Cauchy-Dirichlet problem for the porous medium equation $u_t=(u^m)_{xx}$ on the half line with zero boundary data and nonnegative compactly supported integrable initial data behave for large times as a dipole type solution to the equation having the same first moment as the initial data, with an error which is $o(t^{-1/m})$. However, on sets of the form $0<x<g(t)$, with $g(t)=o(t^{1/(2m)})$ as $t\to\infty$, in the so called near field, the dipole solution is $o(t^{-1/m})$, and their result does not give neither the right rate of decay of the solution, nor a nontrivial asymptotic profile. In this paper we will show that the error is $o\big(t^{-(2m+1)/(2m^2)}(1+x)^{1/m}\big)$. This allows in particular to obtain a nontrivial  asymptotic profile in the near field limit, which is a multiple of $x^{1/m}$, thus improving in this scale the results of Kamin and V\'azquez.
\end{abstract}

\maketitle

\section{Introduction}
\label{sect-Introduction} \setcounter{equation}{0}

This paper is concerned with the large time behavior of solutions to the porous medium equation (PME in what follows) in the half line,
\begin{equation}
\label{eq:main}
u_t=(u^m)_{xx},\quad (x,t)\in\mathbb{R}_+\times\mathbb{R}_+,\qquad u(x,0)=u_0(x),\quad x\in\mathbb{R}_+,
\end{equation}
with $m>1$, and nonnegative, and compactly supported integrable initial data. This problem, which models the flow of a fluid in a porous medium, has a unique weak solution; see~\cite{Vazquez-book}.
The asymptotic behavior was first studied by Kamin and V\'azquez in \cite{Kamin-Vazquez-1991}, and depends heavily on the fact that solutions to~\eqref{eq:main} preserve the first moment along the evolution, $\int_0^\infty x u(x,t)\,dx=\textrm{constant}$ for all $t>0$. Indeed, the behavior is given in terms of the so called \emph{dipole solution} of the PME with first moment $M=\int_0^\infty xu_0(x)\,dx$,
\begin{equation}
\label{eq:selfsimilar.form.dipole}
D_M(x,t)=t^{-\alpha}F_M(\xi), \qquad\xi=x/t^\beta,\quad \alpha=\frac1m,\ \beta=\frac1{2m}.
\end{equation}
Note that this solution has a self-similar structure, and that, due to the choice of the similarity exponents $\alpha$ and $\beta$, its first moment is constant in time. The precise result in~\cite{Kamin-Vazquez-1991} states that
\begin{equation}
\label{eq:KV.result}
\lim_{t\to\infty}t^{\alpha}\sup_{x\in\mathbb{R}_+}|u(x,t)-D_M(x,t)|=0,\qquad M=\int_0^\infty xu_0(x)\,dx.
\end{equation}
In order for $D_M$ to be a weak  solution of the equation on the half-line, with $D_M(0,t)=0$, the profile $F_M$ has to solve, in a weak sense,
\begin{equation}
\label{eq:profile}
(F_M^m)''(\xi)+\beta\xi F_M'(\xi)+\alpha F_M(\xi)=0,\quad\xi\in\mathbb{R}_+, \qquad F_M(0)=0,
\end{equation}
while the condition on the value of the first moment imposes
$$
\int_0^\infty \xi F_M(\xi)\,d\xi=M.
$$
A simple scaling argument shows that $F_M(\xi)=M^{1/m}F_1(\xi/M^{(m-1)/(2m)})$. It turns out that there is a unique bounded profile corresponding to $M=1$, which is moreover explicit,
\begin{equation}\label{eq-F-moment}
F_1(\xi)=\xi^{\frac1m}\Big(C_m-\kappa_m\,\xi^{\frac{m+1}m}\Big)_+^{\frac1{m-1}},
\end{equation}
with constants $C_m$ and $\kappa_m$ given by
\begin{equation}
\label{eq:constants}
\kappa_m=\frac{m-1}{2m(m+1)},\qquad C_m=\left(\frac{\kappa_m^{\frac{2m+1}{m+1}}}{\int_0^1 s^{\frac{m+1}{m}}(1-s^{\frac{m+1}{m}})^{\frac1{m-1}}\,ds}\right)^{(m^2-1)/(2m^2)};
\end{equation}
see~\cite{Barenblatt-Zeldovich-1957,Gilding-Peletier-1976,Gilding-Peletier-1977}.
Note that $F_1$ has a compact support, $[0,\xi_1]$, where $\xi_1=(C_m/\kappa_m)^{m/(m+1)}$. Thus, $F_M$ has compact support $[0,\xi_M]$, with
$$
\xi_M=\xi_1 M^{(m-1)/(2m)}.
$$
Let us remark that $\lim_{t\to0^+}\int_0^\infty D_M(x,t)\varphi(x)\,dx=M\varphi'(0)$. In other words, the antisymmetric extension of $D_M$, $\overline D_M$, satisfies $\overline D_M(\cdot,t)\to -2M\delta'$, where $\delta'$ is the distributional derivative of the delta function. This is called in physics an elementary dipole. Hence the name dipole solution of the PME for $D_M$.

\medskip

\noindent\emph{Remarks. } (a) The result in \cite{Kamin-Vazquez-1991} states that solutions to the signed PME in the whole real line
\begin{equation}
\label{eq:signed.PME}
u_t=(|u|^{m-1}u)_{xx},\qquad (x,t)\in\mathbb{R}\times\mathbb{R}_+
\end{equation}
with an integrable and compactly supported initial data having zero mass and a nontrivial first moment $\int_{\mathbb{R}}xu(x,0)\,dx=P$ converge to $\overline D_{P/2}$, with an error which is $o(t^{-\alpha})$.  Solutions  to~\eqref{eq:main} clearly fall within this framework, since they coincide with the restriction to the half line of the solution to~\eqref{eq:signed.PME} having as initial datum the antisymmetric extension of $u_0$.

\noindent (b) The proof in \cite{Kamin-Vazquez-1991} uses that $v(x,t)=\int_{-\infty}^x u(y,t)\,dy$ is a solution to the $p$-laplacian evolution equation with $p=m+1$,
\begin{equation}
\label{eq:p.laplacian}
v_t=(|v_x|^{m-1}v_x)_x,\qquad (x,t)\in\mathbb{R}\times\mathbb{R}_+.
\end{equation}
Convergence for $u$ is deduced from the convergence of $v$ and its derivatives. In particular, $D_M=\partial_x B_{-2M}$, where $B_K$ denotes the source type solution to~\eqref{eq:p.laplacian}, that has $K\delta$ as initial datum.

\medskip

The limit~\eqref{eq:KV.result} gives the first nontrivial term in the asymptotic expansion of $u$ for $x=O(t^\beta)$, in the so called \emph{far field limit}. However, since $F_M(0)=0$, in the \emph{near field}, $x=o(t^\beta)$, it only says that $u$ is $o(t^{-\alpha})$. The aim of this paper is to improve the result of Kamin and V\'azquez in the near field, giving a sharp decay rate, which is faster than that in the far field, and a nontrivial asymptotic profile, which turns out to be a multiple of $x^{1/m}$. The precise result reads as follows.
\begin{teo} Let $u$ be the unique weak solution to~\eqref{eq:main}. Then
\begin{equation}
\label{eq:main.result}
\lim_{t\to\infty} t^{\alpha+\frac\beta m}\sup_{x\in\mathbb{R}_+}\frac{\big|u(x,t)-D_M(x,t)\big|}{(1+x)^{\frac1 m}}=0,
\end{equation}
where $D_M$ is the unique dipole solution to the PME with first moment $M=\int_0^\infty xu_0(x)\,dx$.
\end{teo}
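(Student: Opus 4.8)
Write $q:=\alpha+\tfrac\beta m=\tfrac{2m+1}{2m^2}$ and $A_M:=C_m^{1/(m-1)}M^{(m+1)/(2m^2)}$. From~\eqref{eq-F-moment}, the scaling $F_M(\xi)=M^{1/m}F_1\big(\xi M^{-(m-1)/(2m)}\big)$ and $0\le F_1(\eta)\le C_m^{1/(m-1)}\eta^{1/m}$, one gets $F_M(\xi)=A_M\xi^{1/m}\big(1+O(\xi^{(m+1)/m})\big)$ and $0\le D_M(x,t)\le A_M\,t^{-q}x^{1/m}$ everywhere. The plan is a matched asymptotics argument: \eqref{eq:KV.result} already controls the far field, and it remains to understand $u$ on $\{x=o(t^\beta)\}$, where after a parabolic rescaling the limit turns out to be rigid. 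First I would reduce: on $\{x\ge\varepsilon t^\beta\}$ one has $t^{q}|u-D_M|(1+x)^{-1/m}\le\varepsilon^{-1/m}t^{\alpha}|u-D_M|\to0$ by~\eqref{eq:KV.result}, so it suffices to prove that $t_n^{q}|u(x_n,t_n)-D_M(x_n,t_n)|(1+x_n)^{-1/m}\to0$ for every $t_n\to\infty$ and every $x_n$ with $x_nt_n^{-\beta}\to0$.

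\emph{A priori bounds.} I would first establish $\kappa\,t^{-q}x^{1/m}\le u(x,t)\le K\,t^{-q}x^{1/m}$ on $\{t\ge t_1,\ 0<x<\theta t^\beta\}$ for suitable constants. For the upper bound, compare on this set with the supersolution $K\,t^{-q}x^{1/m}$ (a supersolution since $(\,\cdot\,)_t<0=(\,\cdot^{\,m}\,)_{xx}$): on the lateral face $x=\theta t^\beta$ the inequality holds because $u(\theta t^\beta,t)=t^{-\alpha}F_M(\theta)+o(t^{-\alpha})\le A_M\theta^{1/m}t^{-\alpha}(1+o(1))$ by~\eqref{eq:KV.result}, while on $\{t=t_1\}$ it follows from the boundary regularity $u(x,t_1)\le C(t_1)x^{1/m}$ of the porous medium flow at a fixed Dirichlet boundary. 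For the lower bound, note that $\frac d{dt}\int_0^\infty u\,dx=-(u^m)_x(0,t)\le0$, that $\int_0^\infty u\,dx$ tends to $0$ but is not eventually constant, that $u(\cdot,t)>0$ on an interval $(0,\zeta(t))$ with $\zeta(t)\to\infty$, and that $u(x,t)\sim\big({-}\tfrac d{dt}\int_0^\infty u\big)^{1/m}x^{1/m}$ as $x\to0^+$; hence at a suitable time $t_1$ one may fit a dipole $D_{M'}$ of small first moment under $u(\cdot,t_1)$, and the comparison principle propagates $u\ge D_{M'}\ge\kappa\,t^{-q}x^{1/m}$.

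\emph{Rescaling to a limit problem.} Arguing by contradiction, suppose the near-field statement fails along some $t_n,x_n$ with $x_nt_n^{-\beta}\to0$. Set $\ell_n:=\max\{1,x_n\}$, $\sigma_n:=t_n^{q(m-1)}\ell_n^{(m+1)/m}$ and $W_n(y,s):=\ell_n^{-1/m}\,t_n^{q}\,u(\ell_n y,\sigma_n s+t_n)$. A direct computation shows $\partial_s W_n=(W_n^m)_{yy}$ on $\{y>0\}$, $W_n(0,s)=0$, and that the time window expands, $t_n/\sigma_n=(t_n^\beta/\ell_n)^{(m+1)/m}\to\infty$. By the a priori bounds and uniform interior and lateral-boundary regularity for the porous medium equation, a subsequence converges locally uniformly on $[0,\infty)\times\mathbb{R}$ to a nonnegative eternal solution $W_\infty$ with $W_\infty(0,s)=0$ and $\kappa\,y^{1/m}\le W_\infty(y,s)\le K\,y^{1/m}$.

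\emph{Identification of the limit and conclusion — the hard part.} The key is $W_\infty\equiv A_M y^{1/m}$. I would get $W_\infty\le A_M y^{1/m}$ by comparing each $W_n$ on a large box with the stationary supersolution $(A_M+\delta)y^{1/m}$, the outer lateral condition being supplied by $F_M\le A_M(\cdot)^{1/m}$ together with~\eqref{eq:KV.result}. Next, using $\frac d{dt}\int_0^\infty u\,dx=-(u^m)_x(0,t)$, the asymptotics $\int_0^\infty u\,dx=\|F_M\|_{L^1}t^{-(\alpha-\beta)}(1+o(1))$ from~\eqref{eq:KV.result}, and the identity $A_M^m=\beta\|F_M\|_{L^1}$ (each side is the boundary flux of $D_M$ in self-similar units), one checks that the time-averaged boundary flux of $W_n$ at $y=0$ tends to $A_M^m$, hence $W_\infty(y,s)=A_M y^{1/m}+o(y^{1/m})$ as $y\to0$, uniformly in $s$. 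Finally, $Z:=A_M y^{1/m}-W_\infty\ge0$ is an eternal solution of a degenerate linear parabolic equation $\partial_s Z=\partial_{yy}(aZ)$ with $a>0$ on $\{y>0\}$, vanishing on $\{y=0\}$, with $Z=o(y^{1/m})$ there and $Z=O(y^{1/m})$ at infinity; a Liouville-type argument forces $Z\equiv0$, i.e. $W_\infty=A_M y^{1/m}$. Evaluating the local uniform convergence at $(x_n/\ell_n,0)$ then gives $t_n^q u(x_n,t_n)=A_M x_n^{1/m}(1+o(1))$ when $x_n\to\infty$ and $t_n^q u(x_n,t_n)\to0$ when $x_n$ stays bounded; since also $t_n^q D_M(x_n,t_n)=A_M x_n^{1/m}(1+o(1))$ by the expansion of $F_M$ at $0$, the numerator $t_n^q|u-D_M|(x_n,t_n)$ is $x_n^{1/m}o(1)$ while $x_n^{1/m}(1+x_n)^{-1/m}\le1$, a contradiction, which proves~\eqref{eq:main.result}. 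The genuine obstacles are this rigidity step — transferring global conservation of mass into the boundary flux of $W_\infty$ and the Liouville theorem selecting the profile $A_M y^{1/m}$ (equivalently, the sharp lower bound $u\gtrsim A_M t^{-q}x^{1/m}$ in the intermediate zone $1\ll x\ll t^\beta$) — and the boundary regularity and strict positivity underpinning the a priori bounds.
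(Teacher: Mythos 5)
Your proposal is a genuinely different route from the paper's: instead of constructing explicit sub- and supersolutions of the form $k(t)\,t^{-\alpha}F_M\big((x\pm a)/t^\beta\big)$ that track $D_M$ exactly, you rescale to an eternal limit problem and try to identify the profile by a Liouville-type argument. The comparison is interesting but your sketch has a genuine gap at the very step you yourself flag as the hard one, and I do not see how to close it with the tools you invoke.

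The gap is in the identification of the boundary flux of $W_\infty$. Your rescaled time window has physical length $O(\sigma_n)$ around $t_n$, with $\sigma_n/t_n\to 0$. The only global information about the flux $(u^m)_x(0,t)$ available from Kamin--V\'azquez is the integrated identity $\int_0^\infty u(\cdot,t)=\|F_M\|_{L^1}t^{\beta-\alpha}\big(1+e(t)\big)$ with $e(t)\to 0$ \emph{at an unspecified rate}. Averaging the flux over the window $[t_n-R\sigma_n,t_n+R\sigma_n]$ produces, after rescaling, the desired constant $\beta\|F_M\|_{L^1}=A_M^m$ \emph{plus} an error of size $\big(|e(t_n-R\sigma_n)|+|e(t_n+R\sigma_n)|\big)\,t_n/(R\sigma_n)$. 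Since $t_n/\sigma_n\to\infty$ and $e$ is merely $o(1)$ with no rate, this error need not vanish: you cannot choose $R\to\infty$ slowly enough to kill it without a quantitative version of \eqref{eq:KV.result}, which is precisely the kind of improvement the theorem itself is meant to supply. The a priori bounds $\kappa\,t^{-q}x^{1/m}\le u\le K\,t^{-q}x^{1/m}$ only pin the rescaled flux into an interval $[\kappa^m,K^m]$, not to $A_M^m$; and if $W_\infty$ happened to equal $c\,y^{1/m}$ with $c<A_M$, the final evaluation at $(x_n/\ell_n,0)$ would give $t_n^q|u-D_M|(x_n,t_n)\gtrsim(A_M-c)\,x_n^{1/m}$, which is exactly the failure you are trying to exclude. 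So the lower bound $W_\infty\ge A_M y^{1/m}$ is not established. In addition, the Liouville theorem for nonnegative eternal solutions of the degenerate equation $\partial_s Z=\partial_{yy}(aZ)$ that you invoke is not a standard statement and would itself require a proof; and the a priori upper bound rests on the boundary estimate $u(x,t_1)\le C(t_1)x^{1/m}$ for the Dirichlet problem, which is plausible but needs a real argument (it is not a consequence of interior regularity for the PME).

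By contrast, the paper sidesteps both issues by choosing sub/supersolutions $c(t)\,t^{-\alpha}F_M\big((x-a)/t^\beta\big)$ and $k(t)\,t^{-\alpha}F_M\big((x+a)/t^\beta\big)$ whose time factor $c(t)\nearrow1$, $k(t)\searrow1$ is adjustable at $t=T$: the initial ordering at the bottom of the comparison region is then obtained simply from boundedness (for $k_0$ large) and from strict positivity (Lemma~\ref{lem:positivity}, for $c_0$ small), while the ordering on the lateral boundary $x=\delta t^\beta$ is exactly what \eqref{eq:KV.result} supplies. This requires no rate in the KV theorem and no Liouville statement. If you want to pursue your blow-up strategy, you would need either a quantitative version of \eqref{eq:KV.result}, or a different mechanism (not mass balance over a short window) to force the boundary flux of the eternal limit to equal $A_M^m$.
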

In particular, on compact sets, $x\in [0,K]$,  $u$ is $O(t^{-(\alpha+\frac\beta m)})$, and $t^{\alpha+\frac\beta m}u(x,t)$ converges to
$M^{2/(m+1)} C_m^{1/(m-1)}x^{1/m}$, while
$$
\sup_{0\le x\le g(t)}u(x,t)=O\Big(t^{\alpha+\frac\beta m}/(g(t))^{1/m}\Big)
$$
if $g(t)\to\infty$, $g(t)=o(t^\beta)$.

We already know that the result is true in the far-field scale, $\xi_1\le x/t^\beta\le \xi_2$, $0<\xi_1\le\xi_2<\infty$; see~\eqref{eq:KV.result}.
If $x\ge g(t)t^\beta$, with $g(t)\to\infty$ as $t\to\infty$ (the very far field scale), formula \eqref{eq:KV.result} yields a better result. Nevertheless, since $D_M(x,t)=0$ for $x\ge \xi_M t^\beta$, it only says that the solution is $o(t^{-\alpha})$ there. However, by using some asymptotic formulas from \cite{Esteban-Vazquez-1988}, Kamin and V\'azquez prove that $s(t)=\sup\{x:u(x,t)>0\}$ satisfies
$$
s(t)=\xi_M t^\beta+o(1),\quad s(t)\ge \xi_M t^\beta,
$$
which gives a complete characterization of the asymptotic behavior for $x>\xi_M t^\beta$. Hence, it only remains to check what happens in the near-field. This is done through a matching argument with the outer behavior, which is based on a clever choice of sub and supersolutions. Comparison is performed in sets of the form
$0< x <\delta t^\beta$, for some small $\delta$. The ordering in the outer boundary comes from the outer behavior which was  already known from the analysis in \cite{Kamin-Vazquez-1991}. We devote Section~\ref{sect-supersolution} to obtain the upper limit and Section~\ref{sect-subsolution} to get the lower one.

A similar analysis for the linear heat equation has been recently performed in \cite{CEQW2}. However, in that case linearity made things easier, since a representation formula for the solution in terms of the initial datum was available. That paper also considers a nonlocal version of the heat equation.

\section{Control from above}
\label{sect-supersolution} \setcounter{equation}{0}

The purpose of this section is to prove the \lq\lq upper'' part of~\eqref{eq:main.result}. To this aim we will construct a supersolution $V$ approaching $D_M$ with the right rate as $t$ goes to infinity.  We only need the function $V$ to  be a supersolution in sets of the form
$$
A_{\delta,T}=\{(x,t)\,,\,t\ge T,\ 0<x<\delta t^\beta\}
$$
for $T>0$ big and $\delta>0$ small. Our candidate is
\begin{equation}
\label{eq:V}
V(x,t)= k(t)t^{-\alpha}F_M\Big(\frac{x+a}{t^{\beta}}\Big), \quad a>0,
\end{equation}
for some function $k$ satisfying $k(t)\searrow 1$ as $t\to\infty$. It will turn out that a good choice for $k$ is given by the solution to
\begin{equation}
\label{eq:k}
tk'(t)=-\alpha{(k^m(t)-k(t))},\quad t>T,\qquad k(T)=k_0>1.
\end{equation}
Note that $k(t)$ is well defined and that it is a monotone decreasing function of time.

We start by proving that $V$ is a supersolution to the PME in $A_{\delta,T}$ if $\delta$ is small and $T$ is big.

\begin{lema}
\label{lem:supersolution}
Let $m>1$ and $M>0$. There exist values $\bar\delta>0$ and $\overline T>0$  depending only on $M$ and $m$ such that for all $a\in(0,1)$, $T\ge\overline T$, and $k_0>1$  the function $V$ given by~\eqref{eq:V}--\eqref{eq:k} satisfies
$$
V_t-(V^m)_{xx}\ge 0 \quad\text{in }A_{\delta,T} \text{ for all }\delta\in(0,\bar\delta).
$$
\end{lema}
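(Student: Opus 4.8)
The plan is to compute $V_t - (V^m)_{xx}$ directly and show it is nonnegative by splitting it into a term that vanishes because $F_M$ solves the profile equation~\eqref{eq:profile}, plus error terms controlled by the smallness of $\delta$ and the size of $T$. Write $\eta = (x+a)/t^\beta$ for brevity. Since $V = k(t) t^{-\alpha} F_M(\eta)$ we have $V^m = k^m t^{-m\alpha} F_M^m(\eta)$, and differentiating,
\begin{align*}
(V^m)_{xx} &= k^m t^{-m\alpha - 2\beta} (F_M^m)''(\eta),\\
V_t &= k' t^{-\alpha} F_M(\eta) + k t^{-\alpha}\Big(-\alpha t^{-1} F_M(\eta) - \beta \eta t^{-1} F_M'(\eta)\Big).
\end{align*}
Using the relation $m\alpha + 2\beta = \alpha + 1$ (which holds since $\alpha = 1/m$, $\beta = 1/(2m)$, so $m\alpha + 2\beta = 1 + 1/m = \alpha + 1$), both the PME terms carry the common factor $t^{-\alpha - 1}$. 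Factoring it out,
$$
t^{\alpha+1}\big(V_t - (V^m)_{xx}\big) = t\,k'\,F_M(\eta) - k\big(\alpha F_M(\eta) + \beta\eta F_M'(\eta)\big) - k^m (F_M^m)''(\eta).
$$

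Now I substitute the profile equation $(F_M^m)''(\eta) = -\beta\eta F_M'(\eta) - \alpha F_M(\eta)$ (valid in the interior of the support, and the natural place where $V>0$), so the last two terms combine to
$$
-k\big(\alpha F_M + \beta\eta F_M'\big) + k^m\big(\beta\eta F_M' + \alpha F_M\big) = (k^m - k)\big(\alpha F_M + \beta\eta F_M'\big) = (k^m-k)\big(\alpha F_M - (F_M^m)''\big) \cdot \tfrac{1}{1} ,
$$
but more cleanly: the expression equals $(k^m - k)(\alpha F_M + \beta \eta F_M')$. Then using the ODE~\eqref{eq:k} for $k$, namely $tk' = -\alpha(k^m - k)$, the first term is $-\alpha(k^m-k)F_M(\eta)$. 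Collecting,
$$
t^{\alpha+1}\big(V_t - (V^m)_{xx}\big) = (k^m - k)\Big(-\alpha F_M(\eta) + \alpha F_M(\eta) + \beta\eta F_M'(\eta)\Big) = (k^m - k)\,\beta\,\eta\,F_M'(\eta).
$$
Since $k(t) > 1$ forces $k^m - k > 0$, and $\beta, \eta > 0$, the sign of the right-hand side is the sign of $F_M'(\eta)$. The point of introducing the shift $a>0$ and restricting to $0 < x < \delta t^\beta$ is precisely that then $\eta = (x+a)/t^\beta \in (a t^{-\beta}, (\delta + a) ... )$; actually $a/t^\beta \le \eta \le (\delta t^\beta + a)/t^\beta = \delta + a t^{-\beta}$, so for $t \ge \overline T$ large and $\delta$ small we keep $\eta$ bounded away from the support's right endpoint $\xi_M$ and inside the region where $F_1$, hence $F_M$, is increasing. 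From the explicit formula~\eqref{eq-F-moment}, $F_1(\xi) = \xi^{1/m}(C_m - \kappa_m \xi^{(m+1)/m})_+^{1/(m-1)}$ is increasing near $\xi = 0$ and up to some $\xi^* \in (0,\xi_M)$; choosing $\bar\delta$ so that $\bar\delta + 1 < \xi^*_M$ (where $\xi^*_M = \xi^* M^{(m-1)/(2m)}$) and $\overline T$ so that $a t^{-\beta} < $ this bound is automatic since $a < 1$, we get $F_M'(\eta) > 0$ throughout $A_{\delta,T}$.

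**The main subtlety** — and where care is needed — is the weak/degenerate nature of the computation: $F_M^m$ is only $C^1$ across the free boundary, so the profile identity $(F_M^m)'' = -\beta\eta F_M' - \alpha F_M$ must be read in the weak sense, and one must check the supersolution inequality is not spoiled at the interface $\eta = \xi_M$ where $V$ meets zero. But by the choice of $\delta$ and $a$ we stay strictly inside $\{F_M > 0\}$, so this issue does not arise in $A_{\delta,T}$ and all computations are classical there. A secondary check is that the constant $\overline T$ and $\bar\delta$ can indeed be chosen depending only on $m$ and $M$ (not on $a \in (0,1)$, $k_0 > 1$, or $T \ge \overline T$): this follows because the only constraints are $\eta$ staying in the increasing regime of $F_M$ — which depends only on the profile, i.e. on $m$ and $M$ — and $a<1$ gives a uniform bound $at^{-\beta} < 1$ once $t \ge 1$. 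I would therefore set $\overline T \ge 1$ and $\bar\delta$ equal to (a fixed fraction of) the length of the increasing part of the support of $F_M$ minus $1$, assuming this is positive; if it is not positive for small $m$, one shrinks using that $F_M'(0^+) > 0$ so there is always a nondegenerate increasing interval $[0,\xi^*_M]$, and $\overline T$ is taken large enough that $at^{-\beta} + \delta < \xi^*_M$. Putting these together yields $V_t - (V^m)_{xx} = (k^m-k)\beta\eta F_M'(\eta)\, t^{-\alpha-1} \ge 0$ in $A_{\delta,T}$, which is the claim.
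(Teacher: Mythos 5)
Your proof is correct and follows essentially the same path as the paper's: you compute $V_t-(V^m)_{xx}$, use the profile equation~\eqref{eq:profile} and the ODE~\eqref{eq:k} for $k$ to reduce the expression to $t^{-\alpha-1}(k^m-k)\beta\eta F_M'(\eta)$, and then choose $\bar\delta$ and $\overline T$ so that $\eta$ stays in the interval where $F_M$ is increasing. The paper simply packages the algebra in one fewer step (using the profile equation once to write the bracket as $(k-k^m)(F_M^m)''$, then again as $(k^m-k)\beta\xi F_M'$) and fixes the constants more cleanly via $\overline T=(1/\bar\delta)^{1/\beta}$ so that $a/t^\beta\le\bar\delta$, but the substance is identical.
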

\begin{proof}
Let $\xi=\frac {x+a}{t^{\beta}}$. Since $m\alpha+2\beta=\alpha+1$, a straightforward computation combined with~\eqref{eq:profile} shows that
\[
\big(V_t-(V^m)_{xx}\big)(x,t)= t^{-\alpha-1}\big(tk'(t)F_M(\xi)+(k(t)-k^m(t)) (F_M^m)''(\xi)\big).
\]
Thus, if we choose $k$ satisfying~\eqref{eq:k}, we get
\[
\big(V_t-(V^m)_{xx}\big)(x,t)= t^{-\alpha-1}\big(k^m(t)-k(t)\big)\big(-\alpha F_M(\xi)-(F_M^m)''(\xi)\big),\quad x\in\mathbb{R}_+,\ t\ge T.
\]
Now we observe that there is a value $\bar \xi\in(0,\xi_1)$ such that $F_1'(\xi)>0$ for $\xi\in(0,\bar \xi)$. Therefore,
\begin{equation}
\label{eq:positivity.derivative}
F_M'(\xi)>0, \quad \xi\in\big(0,\bar\xi M^{(m-1)/(2m)}\big).
\end{equation}
On the other hand, if we take $\bar\delta<\bar\xi M^{(m-1)/(2m)}/2$, and then $\overline T=(1/\bar\delta)^{1/\beta}$, for any $\delta\in(0,\bar\delta)$ and $T\ge\overline T$ we get
$$
0< \xi=\frac {x+a}{t^{\beta}}\le 2\bar\delta<\bar\xi M^{(m-1)/(2m)},\quad (x,t)\in A_{\delta,T}.
$$
Hence,  $-(F_M^m)''(\xi)-\alpha F_M(\xi)=\beta\xi F_M'(\xi)>0 $ if $(x,t)\in A_{\delta,T}$, and the result follows, since $k(t)>1$ for all times.
\end{proof}

We now arrive at the matching part of the result where, using the behavior in the far field scale, we obtain an upper bound in sets of the form $A_{\delta, T}$ for $\delta$ small and $T$ large.

\begin{lema}
\label{lem:control-from-above} Let $u$ be the unique weak solution to \eqref{eq:main}, $M=\int_0^\infty xu_0(x)\,dx$,  and $\bar\delta$ and $\overline T$ as in Lemma~\ref{lem:supersolution}. For every $\ep>0$ there exists a value $T_\ep\ge \overline T$  such that for all $a\in(0,1)$, and $T\ge T_\ep$ there is a value $k_0\ge1$ such that the function $V$ given by~\eqref{eq:V}--\eqref{eq:k} satisfies
\begin{equation}
\label{eq:upper.estimate}
u(x,t)\le \Big(1+C_\delta\ep\Big)V(x,t),\quad (x,t)\in A_{\delta,T}, \qquad C_\delta=1/F_M(\delta),\qquad \delta\in(0,\bar\delta).
\end{equation}
\end{lema}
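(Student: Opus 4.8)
The plan is to combine the supersolution property of $V$ from Lemma~\ref{lem:supersolution} with a comparison argument on the parabolic boundary of $A_{\delta,T}$, feeding in the far-field information~\eqref{eq:KV.result} of Kamin and V\'azquez to control $u$ on the lateral boundary $\{x=\delta t^\beta\}$, and the vanishing of $V$'s Dirichlet datum (well, its strict positivity because of the shift $a>0$) together with a control at the initial time $t=T$. The comparison principle for the PME on the strip $A_{\delta,T}$ then propagates the ordering inward.

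\medskip

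\noindent\emph{Step 1: boundary at $x=0$.} Since $a>0$, we have $V(0,t)=k(t)t^{-\alpha}F_M(a/t^\beta)>0=u(0,t)$ for all $t$, so no work is needed on the part of $\partial A_{\delta,T}$ lying on the axis $\{x=0\}$; any nonnegative multiple $(1+C_\delta\ep)V$ still dominates there.

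\medskip

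\noindent\emph{Step 2: lateral boundary $x=\delta t^\beta$.} Here $\xi=(x+a)/t^\beta=\delta+a/t^\beta\ge\delta$, and since $F_M$ is increasing near the origin (by~\eqref{eq:positivity.derivative}, for $\delta<\bar\delta$) we get $F_M((\delta t^\beta+a)/t^\beta)\ge F_M(\delta)$, hence $V(\delta t^\beta,t)\ge k(t)t^{-\alpha}F_M(\delta)\ge t^{-\alpha}F_M(\delta)$. On the other hand, by~\eqref{eq:KV.result}, for every $\ep>0$ there is $T_\ep\ge\overline T$ with $t^\alpha|u(x,t)-D_M(x,t)|\le\ep$ for all $x$ and all $t\ge T_\ep$; in particular at $x=\delta t^\beta$, where $D_M(\delta t^\beta,t)=t^{-\alpha}F_M(\delta)$, this gives
\[
u(\delta t^\beta,t)\le t^{-\alpha}\big(F_M(\delta)+\ep\big)=t^{-\alpha}F_M(\delta)\big(1+C_\delta\ep\big)\le\big(1+C_\delta\ep\big)V(\delta t^\beta,t),
\]
with $C_\delta=1/F_M(\delta)$, for all $t\ge T_\ep$.

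\medskip

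\noindent\emph{Step 3: initial slice $t=T$, choice of $k_0$.} Fix any $T\ge T_\ep$. On the segment $\{t=T,\ 0\le x\le\delta T^\beta\}$ the solution $u(\cdot,T)$ is a fixed bounded function, while $V(x,T)=k_0\,T^{-\alpha}F_M((x+a)/T^\beta)$ is bounded below by $k_0\,T^{-\alpha}F_M(a/T^\beta)>0$ (using monotonicity of $F_M$ near $0$ and $x\ge0$). Choosing $k_0\ge1$ large enough — depending on $T$, $a$, $M$, $m$ and $\|u(\cdot,T)\|_\infty$ — we arrange $u(x,T)\le k_0 T^{-\alpha}F_M(a/T^\beta)\le V(x,T)\le(1+C_\delta\ep)V(x,T)$ on the whole initial segment.

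\medskip

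\noindent\emph{Step 4: comparison.} By Lemma~\ref{lem:supersolution}, $\widetilde V:=(1+C_\delta\ep)V$ is a supersolution to $u_t=(u^m)_{xx}$ in $A_{\delta,T}$ (multiplication by the constant $(1+C_\delta\ep)^m\ge(1+C_\delta\ep)$ only helps the sign, or one rescales; in any case $\widetilde V_t-(\widetilde V^m)_{xx}=(1+C_\delta\ep)V_t-(1+C_\delta\ep)^m(V^m)_{xx}\ge(1+C_\delta\ep)\big(V_t-(V^m)_{xx}\big)\ge0$). Steps~1--3 show $u\le\widetilde V$ on the parabolic boundary of $A_{\delta,T}$. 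The comparison principle for weak solutions of the PME on bounded space-time cylinders (see~\cite{Vazquez-book}) — applied on $A_{\delta,T}\cap\{t\le\tau\}$ for each $\tau$, or directly on the unbounded strip after an exhaustion argument, noting $V$ stays bounded away from $0$ on the lateral boundary — then yields $u\le\widetilde V=(1+C_\delta\ep)V$ throughout $A_{\delta,T}$, which is~\eqref{eq:upper.estimate}.

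\medskip

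\noindent\emph{Main obstacle.} The delicate point is Step~4: $A_{\delta,T}$ is an unbounded domain with a moving lateral boundary $x=\delta t^\beta$, and the PME comparison principle in the cleanest form is stated on fixed bounded cylinders. One must either truncate in time and pass to the limit, or observe that the lateral inflow is controlled because $V$ is bounded below by a positive quantity there while $u$ stays below it; a standard Oleinik-type argument for weak solutions handles this. A secondary technical nuisance is keeping track that the various thresholds are compatible: $\bar\delta$ and $\overline T$ are fixed by Lemma~\ref{lem:supersolution} in terms of $M,m$ only; $T_\ep$ is then fixed by~\eqref{eq:KV.result}; and only afterwards, once $T\ge T_\ep$ and $a\in(0,1)$ are given, is $k_0$ chosen — so the quantifier order in the statement is respected.
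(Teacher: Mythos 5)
Your proof is correct and takes essentially the same route as the paper: control on the fixed boundary $x=0$ by strict positivity of the shifted profile, control on the moving lateral boundary $x=\delta t^\beta$ via the Kamin--V\'azquez estimate~\eqref{eq:KV.result} together with $D_M\le V$, a choice of large $k_0$ to dominate $u$ on the initial slice $t=T$, and comparison with the supersolution $(1+C_\delta\ep)V$. The quantifier bookkeeping you flag at the end is exactly what the paper does.

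One small point you should make explicit rather than wave at: the step
\[
(1+C_\delta\ep)V_t-(1+C_\delta\ep)^m(V^m)_{xx}\ge(1+C_\delta\ep)\bigl(V_t-(V^m)_{xx}\bigr)
\]
requires $(V^m)_{xx}\le 0$ in $A_{\delta,T}$, which does not come for free from Lemma~\ref{lem:supersolution} (that lemma only gives $V_t-(V^m)_{xx}\ge 0$). It does hold in the region of interest because, with $\xi=(x+a)/t^\beta$,
\[
(V^m)_{xx}(x,t)=k^m(t)\,t^{-(\alpha+1)}\,(F_M^m)''(\xi)=-k^m(t)\,t^{-(\alpha+1)}\bigl(\alpha F_M(\xi)+\beta\xi F_M'(\xi)\bigr)<0,
\]
using the profile ODE~\eqref{eq:profile} and the positivity of $F_M$ and $F_M'$ for $\xi$ below $\bar\xi M^{(m-1)/(2m)}$, which is guaranteed by the choice of $\bar\delta$ and $\overline T$. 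With that line added your Step~4 is watertight, and the argument coincides with the paper's.
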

\begin{proof}
Formula~\eqref{eq:KV.result} implies that  for every $\ep>0$ there exists $T_\ep$, which may be assumed to be larger than $\overline T$, such that
\begin{equation}
\label{eq:outer.epsilon.approx}
t^{\alpha}\big|u(x,t)-D_M(x,t)\big|\le \ep,\quad x\in\mathbb{R}_+,\ t\ge T_\ep.
\end{equation}
For any given $T\ge T_\ep$ and some big enough $k_0>1$ to be determined below, we define $V$ by~\eqref{eq:V}--\eqref{eq:k}.

Since $k(t)>1$, \eqref{eq:positivity.derivative} implies that $D_M(x,t)\le V(x,t)$ in $A_{\delta, \overline T}$, and hence in $A_{\delta, T}$. Therefore,
\[
\begin{aligned}
t^{\alpha}&\big(u(x,t)-V(x,t)\big)
\le
t^{\alpha}|u(x,t)-D_M(x,t)|
 + t^{\alpha}\big(D_M(x,t)-V(x,t)\big)
 \le \ep,\quad (x,t)\in A_{\delta,T}.
 \end{aligned}
 \]
On the other hand, for $x=\delta t^\beta$  and $t\ge T$,
\[
t^{\alpha}V(x,t)\ge t^{\alpha}D_M(x,t)=F_M\Big(\frac x{t^\beta}\Big)=F_M(\delta),
\]
and we conclude that
\[
t^{\alpha}\big(u(x,t)-V(x,t)\big)\le  t^{\alpha}C_\delta\ep V(x,t).
\]
Thus,
\[
u(x,t)\le \underbrace{(1+C_\delta\ep) V(x,t)}_{W(x,t)}\quad\mbox{if } x=\delta t^\beta,\ t\ge T.
\]

Since solutions to~\eqref{eq:signed.PME} with integrable initial data are bounded for $t\ge\tau>0$, see~\cite{Vazquez-book}, formula~\eqref{eq:KV.result} implies that there is a constant $C_0>0$ such that $u(x,t)\le C_0 t^{-\alpha}$ for $t\ge \overline T$.
Thus, using the monotonicity property~\eqref{eq:positivity.derivative}, we get
\[
V(x, T)=k( T)T^{-\alpha}F_M\Big(\frac{x+a}{ T^{\beta}}\Big)
\ge k_0F_M\Big(\frac{a}{ T^{\beta}}\Big) T^{-\alpha}\ge C_0 T^{-\alpha}\ge u(x, T)\quad\mbox{for }0<x<\delta  T^\beta
\]
if $k_0>\max\{C_0/F_M\big(\frac{a}{ T^{\beta}}\big),1\}$. Therefore, with that choice of $k_0$,
\[
u(x, T)\le V(x, T)\le W(x, T)\quad\mbox{if }0<x<\delta  T^\beta.
\]

We now observe that $W$ is a supersolution to the PME in $A_{\delta,T}$. Indeed, in that set we have $(V^m)_{xx}(x,t)=-k(t)t^{-(\alpha+1)}\big(\alpha F_M(\xi)+\beta\xi F_M'(\xi)\big)<0$, and hence Lemma~\ref{lem:control-from-above} implies that
\[\begin{aligned}
W_t-(W^m)_{xx}&= (1+C_\delta\ep)V_t-(1+C_\delta\ep)^m(V^m)_{xx}\\
&= (1+C_\delta\ep)\big(V_t-(V^m)_{xx})-
\big((1+C_\delta\ep)^m-(1+C_\delta\ep)\big)(V^m)_{xx}\ge 0.
\end{aligned}
\]

We finally notice that $W(0,t)>0$ for all $t> T$. Therefore, comparison yields~\eqref{eq:upper.estimate}.
\end{proof}

The third ingredient, that we prove next, is that $V$ and $D_M$ are $\ep$-close in sets of the form $A_{\delta,T}$ for large times, even when the difference is multiplied by $t^{\alpha+\frac\beta m}/(1+x)^{\frac1 m}$, if the parameter $a$ in the definition of $V$ is $O(\ep^m)$.

\begin{lema}\label{lema-D-V} Let $m>1$, $M>0$, and $\ep>0$, and let $\bar\delta$ and $\overline T$ be as in Lemma~\ref{lem:supersolution}. There exist values $\hat\delta\in(0,\bar\delta)$,  $\widehat T\ge \overline T$ independent of $\ep$, and $a_\ep\in(0,1]$,  such that for all $\delta\in(0,\hat\delta)$,  $T\ge\widehat T$ and $a\in(0,a_\ep)$ the function $V$ given by~\eqref{eq:V}--\eqref{eq:k} satisfies
\begin{equation}
\label{eq:epsilon.approximate}
\frac{t^{\alpha+\frac\beta m}}{(1+x)^{\frac1 m}}\big|V(x,t)-D_M(x,t)\big|<\ep\quad\mbox{in }A_{\delta,\widehat T_\ep}\text{ for some }\widehat T_\ep\ge T.
\end{equation}
\end{lema}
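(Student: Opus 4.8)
The plan is to estimate the difference $V(x,t)-D_M(x,t)=t^{-\alpha}\big(k(t)F_M((x+a)/t^\beta)-F_M(x/t^\beta)\big)$ by splitting it into two pieces: a piece coming from the factor $k(t)-1$, and a piece coming from the shift by $a$ in the argument of $F_M$. Write
\[
V(x,t)-D_M(x,t)=t^{-\alpha}(k(t)-1)F_M\Big(\frac{x+a}{t^\beta}\Big)+t^{-\alpha}\Big(F_M\Big(\frac{x+a}{t^\beta}\Big)-F_M\Big(\frac{x}{t^\beta}\Big)\Big).
\]
Since we work in $A_{\delta,T}$ with $\delta<\hat\delta\le\bar\delta$, the argument $\xi=(x+a)/t^\beta$ stays in the region where, by~\eqref{eq:positivity.derivative} and the explicit formula~\eqref{eq-F-moment}, $F_M$ is increasing, smooth, and comparable to $\xi^{1/m}$; in particular there is a constant $c_M$ (depending only on $m,M$) with $F_M(\xi)\le c_M\xi^{1/m}$ and $0<F_M'(\xi)\le c_M\xi^{1/m-1}$ for $0<\xi\le 2\bar\delta$. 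I will use these bounds throughout.

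For the first piece, $t^{-\alpha}(k(t)-1)F_M(\xi)\le c_M (k(t)-1)\,t^{-\alpha}\big((x+a)/t^\beta\big)^{1/m}$, and since $a<1\le 1+x$ this is bounded by $2^{1/m}c_M (k(t)-1) t^{-\alpha-\beta/m}(1+x)^{1/m}$. Hence
\[
\frac{t^{\alpha+\frac\beta m}}{(1+x)^{1/m}}\,t^{-\alpha}(k(t)-1)F_M(\xi)\le 2^{1/m}c_M\,(k(t)-1),
\]
and from the ODE~\eqref{eq:k} we know $k(t)\searrow1$ as $t\to\infty$, so this term is $<\ep/2$ for $t\ge\widehat T_\ep$ with $\widehat T_\ep$ chosen large (depending on $\ep$, $k_0$, $T$); this is where the auxiliary time $\widehat T_\ep\ge T$ in the statement enters. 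For the second piece, by the mean value theorem there is $\eta$ between $x/t^\beta$ and $(x+a)/t^\beta$ with
\[
\Big|F_M\Big(\frac{x+a}{t^\beta}\Big)-F_M\Big(\frac{x}{t^\beta}\Big)\Big|=F_M'(\eta)\,\frac{a}{t^\beta}\le c_M\,\eta^{1/m-1}\,\frac{a}{t^\beta}.
\]
Since $1/m-1<0$ and $\eta\ge x/t^\beta$, we bound $\eta^{1/m-1}\le (x/t^\beta)^{1/m-1}$ when $x>0$; however this blows up as $x\to0$, so near $x=0$ I instead use $\eta\ge a/t^\beta$ (because $(x+a)/t^\beta\ge a/t^\beta$ and $F_M'$ is evaluated at a point $\ge a/t^\beta$ once one is careful — more precisely, split at $x=a$: for $x\ge a$ use $\eta\ge x/t^\beta\ge$ half of $(x+a)/t^\beta$, giving $\eta^{1/m-1}\le C((x+a)/t^\beta)^{1/m-1}$; for $0<x<a$ use $\eta\le 2a/t^\beta$ and $\eta^{1/m-1}\le C(a/t^\beta)^{1/m-1}$). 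In the first case one gets $|F_M(\xi)-F_M(x/t^\beta)|\le C a\,t^{-\beta/m}(x+a)^{1/m-1}$, and multiplying by $t^{\alpha+\beta/m}t^{-\alpha}/(1+x)^{1/m}$ yields $\le Ca/((1+x)^{1/m}(x+a)^{1-1/m})\le Ca^{1/m}$ after using $(x+a)\ge a$ and $(1+x)\ge1$; in the second case one similarly gets $\le Ca^{1/m}$. Either way this term is $<\ep/2$ provided $a<a_\ep$ with $a_\ep=\min\{1,(\ep/(2C))^m\}$.

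The main obstacle is the bookkeeping near $x=0$: the factor $F_M'(\eta)\sim\eta^{1/m-1}$ is singular there, and one must exploit that the shift is by $a$ and that $F_M$ is evaluated at arguments $\ge a/t^\beta$ to see that the singularity is absorbed into the $(1+x)^{1/m}$ weight after the multiplication; choosing $a=a_\ep$ of order $\ep^m$ is exactly what makes the bound uniform down to $x=0$. The remaining choices are routine: take $\hat\delta=\bar\delta$ (or any smaller value), $\widehat T=\overline T$, $a_\ep$ as above, and then for each admissible $\delta,T,a$ pick $\widehat T_\ep\ge T$ large enough that the $k(t)-1$ term is below $\ep/2$; adding the two halves gives~\eqref{eq:epsilon.approximate}.
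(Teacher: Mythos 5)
Your overall decomposition is the same as the paper's: split $V-D_M$ into the piece proportional to $k(t)-1$ and the piece coming from the shift of the argument by $a$. The $k(t)-1$ piece is handled exactly as in the paper and is fine. The gap is in the shift piece near $x=0$.

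You write, for $0<x<a$, \emph{``use $\eta\le 2a/t^\beta$ and $\eta^{1/m-1}\le C(a/t^\beta)^{1/m-1}$.''} Since $1/m-1<0$, the map $\eta\mapsto\eta^{1/m-1}$ is \emph{decreasing}, so an upper bound $\eta\le 2a/t^\beta$ yields $\eta^{1/m-1}\ge (2a/t^\beta)^{1/m-1}$, not $\le$. To bound $\eta^{1/m-1}$ from above you would need a \emph{lower} bound on $\eta$, but the mean value theorem only tells you $\eta\in\bigl(x/t^\beta,(x+a)/t^\beta\bigr)$, and the lower endpoint $x/t^\beta$ can be arbitrarily small. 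Your earlier assertion that ``$F_M'$ is evaluated at a point $\ge a/t^\beta$ once one is careful'' is not something the MVT delivers: it is true that the intermediate point happens to be of order $a/t^\beta$ when $x=0$ (because $F_M$ behaves like $\xi^{1/m}$), but that is a fact you would have to prove, and it is not obtained by bracketing $\eta$ from above.

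The paper sidesteps this entirely by not invoking the MVT. It writes the shift difference as an integral,
\[
F_M\!\Big(\tfrac{x+a}{t^\beta}\Big)-F_M\!\Big(\tfrac{x}{t^\beta}\Big)=\int_0^1 F_M'\!\Big(\tfrac{x+sa}{t^\beta}\Big)\,\frac{a}{t^\beta}\,ds,
\]
applies the pointwise bound $\xi F_M'(\xi)\le K\xi^{1/m}$ \emph{inside} the integral, and then integrates the resulting power,
\[
\int_0^1 (x+sa)^{\frac1m-1}a\,ds=m\bigl[(x+a)^{\frac1m}-x^{\frac1m}\bigr]\le m\,a^{\frac1m},
\]
which is finite uniformly down to $x=0$ because $1/m-1>-1$. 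The singularity of $F_M'$ at the origin is thus absorbed by the integration, not by controlling an unknown intermediate point. If you want to keep a pointwise rather than integral argument, the clean alternative is to use concavity/subadditivity of $F_M$ near the origin: $F_M(b)-F_M(c)\le F_M(b-c)\le c_M\,(b-c)^{1/m}$ for $0\le c\le b$ small, which gives the $a^{1/m}$ bound directly without MVT.

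Everything else (the choice $a_\ep\sim\ep^m$, the bound on the $k(t)-1$ term, the weight $(1+x)^{1/m}$, the role of $\widehat T_\ep$) matches the paper and is correct. One minor caution: the paper introduces $\hat\delta$ possibly smaller than $\bar\delta$ to guarantee the quantitative bound $\xi F_1'(\xi)\le K\xi^{1/m}$ on the relevant range; your choice $\hat\delta=\bar\delta$ works here only because that bound in fact holds on the whole support of $F_1$, but you should state and justify that, since $\bar\delta$ was chosen in Lemma~\ref{lem:supersolution} only to ensure positivity of $F_M'$.
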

\begin{proof}
There holds that
\[
\begin{aligned}
\frac{t^{\alpha+\frac\beta m}}{(1+x)^{\frac1 m}}\big|D_M(x,t)-V(x,t)\big|=&\underbrace{\frac{t^{\frac\beta m}}{(1+x)^{\frac1 m}}\big|F_M\Big(\frac{x+a}{t^\beta}\Big)-F_M\Big(\frac{x}{t^\beta}\Big)\big|}_{\textrm{I}}\\
&+\underbrace{\frac{t^{\frac\beta m}}{(1+x)^{\frac1 m}}F_M\Big(\frac{x+a}{t^\beta}\Big)\big|k(t)-1\big|}_{\textrm{II}}.
\end{aligned}
\]
In order to estimate I we notice that there exist constants $\hat\xi\in(0,\bar\xi)$ and $K>0$ such that $\xi F_1'(\xi)\le K\xi^{\frac 1m}$ for $\xi\in(0,\hat\xi)$. Thus, if we take $\hat\delta<\hat\xi M^{(m-1)/(2m)}/2$, and then $\widehat T=(1/\hat\delta)^{1/\beta}$, for any $\delta\in(0,\hat\delta)$ and $T\ge\widehat T$, we get
\[\begin{aligned}
\textrm{I}&=
\frac{t^{\frac\beta m}}{(1+x)^{\frac1 m}}\int_0^1F_M'\Big(\frac{x+sa}{t^\beta}\Big)\frac a{t^\beta}\,ds=
\frac{t^{\frac\beta m}}{(1+x)^{\frac1 m}}\int_0^1F_M'\Big(\frac{x+sa}{t^\beta}\Big)\frac {x+sa}{t^\beta}\frac a{x+sa}\,ds\\
&\le K \int_0^1{(x+sa)^{\frac1m-1}} a\,ds\le mK a^{\frac1m}\quad\text{in }A_{\delta,T}.
\end{aligned}\]
Therefore, $\textrm{I}<\ep/2$ if $a<a_\ep:=\min\{(\ep/(2mK))^m,1\}$.

As for the other term, we will use that
\begin{equation}
\label{eq:estimate.profile.above}
F_M(\xi)\le C_m^{\frac1{m-1}}M^{\frac{m+1}{2m^2}}\xi^{\frac1m},\quad\xi\in\mathbb{R}_+;
\end{equation}
see formula~\eqref{eq:profile}. Therefore, taking into account that $a<1$, we obtain
\[\begin{aligned}
\textrm{II}\le C_m^{\frac1{m-1}}M^{\frac{m+1}{2m^2}}\Big(\frac{x+a}{x+1}\Big)^{\frac1m}\big|k(t)-1\big|\le
C_m^{\frac1{m-1}}M^{\frac{m+1}{2m^2}}\big|k(t)-1\big|<\ep/2
\end{aligned}\]
if $t\ge \widehat T_\ep$ for some $\widehat T_\ep\ge T$, since $k(t)\to 1$ as $t\to\infty$.
\end{proof}

We finally arrive at the main result of this section, the upper limit.
\begin{prop}
Let $u$ be the unique weak solution to~\eqref{eq:main}, and $D_M$ be the unique dipole solution to the PME with first moment $M=\int_0^\infty xu_0(x)\,dx$. If $\hat\delta$ is the constant given by Lemma~\ref{lema-D-V}, then, for all $\delta\in (0,\hat\delta)$,
\[
\limsup_{t\to\infty} t^{\alpha+\frac\beta m}\sup_{0<x<\delta t^\beta}\frac{\big(u(x,t)-D_M(x,t)\big)}{(1+x)^{\frac1 m}}\le0.
\]
\end{prop}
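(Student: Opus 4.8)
The plan is to combine the three lemmas of this section into a single limiting statement. Fix $\ep>0$. By Lemma~\ref{lema-D-V} there are constants $\hat\delta\in(0,\bar\delta)$ and $\widehat T\ge\overline T$ (independent of $\ep$), together with $a_\ep\in(0,1]$, so that for $a=a_\ep$, every $\delta\in(0,\hat\delta)$ and every $T\ge\widehat T$ the function $V$ defined by~\eqref{eq:V}--\eqref{eq:k} obeys the pointwise bound~\eqref{eq:epsilon.approximate} in $A_{\delta,\widehat T_\ep}$ for some $\widehat T_\ep\ge T$. With the \emph{same} value $a=a_\ep\in(0,1)$, apply Lemma~\ref{lem:control-from-above}: there is $T_\ep\ge\overline T$ and, for any $T\ge T_\ep$, a choice of $k_0\ge1$ such that $V$ satisfies the upper estimate~\eqref{eq:upper.estimate}, i.e. $u\le(1+C_\delta\ep)V$ in $A_{\delta,T}$, with $C_\delta=1/F_M(\delta)$. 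The point is that both lemmas allow $a$ to be prescribed in $(0,1)$, and both require $T$ large and $\delta<\bar\delta$; choosing $a=a_\ep$, $\delta\in(0,\hat\delta)$, and $T\ge\max\{T_\ep,\widehat T\}$ makes both conclusions hold simultaneously on $A_{\delta,\widehat T_\ep}$.

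Next I would turn the two estimates into a bound on $u-D_M$. On $A_{\delta,\widehat T_\ep}$ write
\[
u(x,t)-D_M(x,t)\le (1+C_\delta\ep)V(x,t)-D_M(x,t)= \big(V(x,t)-D_M(x,t)\big)+C_\delta\ep\,V(x,t).
\]
Divide by $(1+x)^{1/m}$ and multiply by $t^{\alpha+\beta/m}$. The first term is controlled by~\eqref{eq:epsilon.approximate}, giving a contribution $<\ep$. For the second term, note $V(x,t)=k(t)t^{-\alpha}F_M((x+a)/t^\beta)$, and use the bound~\eqref{eq:estimate.profile.above}, $F_M(\xi)\le C_m^{1/(m-1)}M^{(m+1)/(2m^2)}\xi^{1/m}$, together with $k(t)\le k_0$ and $a<1$, to get
\[
\frac{t^{\alpha+\frac\beta m}}{(1+x)^{\frac1m}}\,V(x,t)\le k_0\,C_m^{\frac1{m-1}}M^{\frac{m+1}{2m^2}}\Big(\frac{x+a}{x+1}\Big)^{\frac1m}\le k_0\,C_m^{\frac1{m-1}}M^{\frac{m+1}{2m^2}}=:C_1,
\]
a constant independent of $(x,t)$. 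Hence, on $A_{\delta,\widehat T_\ep}$,
\[
\frac{t^{\alpha+\frac\beta m}}{(1+x)^{\frac1m}}\big(u(x,t)-D_M(x,t)\big)< \ep + C_\delta C_1\ep=(1+C_\delta C_1)\ep.
\]
Taking $\sup$ over $0<x<\delta t^\beta$ and then $\limsup_{t\to\infty}$ (the bound holds for all $t\ge\widehat T_\ep$) yields
\[
\limsup_{t\to\infty} t^{\alpha+\frac\beta m}\sup_{0<x<\delta t^\beta}\frac{u(x,t)-D_M(x,t)}{(1+x)^{\frac1m}}\le (1+C_\delta C_1)\ep.
\]
Since $\delta$ is fixed, $C_\delta=1/F_M(\delta)$ and $C_1$ are fixed constants, so letting $\ep\to0$ gives the claimed inequality.

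The one subtlety to watch is the order of quantifiers: $C_\delta$ depends on $\delta$ but not on $\ep$, and $C_1$ depends only on $m$, $M$ and the bound $k_0$; however $k_0$ itself is chosen in Lemma~\ref{lem:control-from-above} depending on $T$ (hence on $\ep$) and on $a=a_\ep$. To keep $C_1$ genuinely $\ep$-independent one should observe that the constant $C_0$ in that lemma (from the global bound $u\le C_0 t^{-\alpha}$ for $t\ge\overline T$) is fixed, and $F_M(a/T^\beta)$ can be bounded below uniformly once $a\le a_\ep\le1$ and $T\le$ some fixed multiple — or, more cleanly, simply absorb $k_0$ into the $\limsup$ by first fixing $\ep$, which fixes $a_\ep$, then fixing $T=\max\{T_\ep,\widehat T\}$, which fixes $k_0$, and only afterwards sending $\ep\to0$; since for each fixed $\ep$ the displayed bound $(1+C_\delta C_1)\ep$ already has $C_\delta$ independent of $\ep$ and $C_1=k_0 C_m^{1/(m-1)}M^{(m+1)/(2m^2)}$ with $k_0$ depending on $\ep$, one must instead note that $k(t)\to1$, so for $t$ large $k(t)\le2$, and replace $k_0$ by $2$ in the estimate of term II-type above, valid for $t\ge$ some $T'_\ep$. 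This is the only place where a little care is needed; everything else is a direct combination of the three lemmas.
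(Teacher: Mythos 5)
Your proof is correct and follows essentially the same route as the paper: decompose $u-D_M\le (V-D_M)+C_\delta\ep V$, control the first term via Lemma~\ref{lema-D-V}, and control the second via the profile bound~\eqref{eq:estimate.profile.above} together with $k(t)\le2$ for large $t$. The subtlety you flag at the end (replacing $k_0$ by $2$ so that the constant is $\ep$-independent) is exactly the device the paper uses, just stated there without the preliminary false start; the only minor slip is writing $a=a_\ep$ where Lemma~\ref{lema-D-V} requires $a\in(0,a_\ep)$, which does not affect the argument.
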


\begin{proof}
Given $\ep>0$, let $T_\ep$ as in Lemma~\ref{lem:control-from-above}, and $\widehat T$ and $a_\ep$ as in Lemma~\ref{lema-D-V}. We take $T\ge \max\{T_\ep,\widehat T\}$ and $a\in(0,a_\ep)$, and then $k_0>1$ large so that the function $V$ defined by~\eqref{eq:V}--\eqref{eq:k} satisfies~\eqref{eq:upper.estimate} and \eqref{eq:epsilon.approximate} for any given $\delta\in(0,\hat \delta)$  for some large $\widehat T_\ep\ge T$.

On the other hand, since
$k(t)\to1$ as $t\to\infty$ and $a\in(0,1)$, using~\eqref{eq:estimate.profile.above} we get
\[
\frac{t^{\alpha+\frac\beta m}V(x,t)}{(1+x)^{\frac1 m}}=\frac{k(t)t^{\frac\beta m}}{(1+x)^{\frac1 m}}F_M\Big(\frac{x+a}{t^\beta}\Big)\le 2 C_m^{\frac1{m-1}}M^{\frac{m+1}{2m^2}}\Big(\frac{x+a}{x+1}\Big)^{\frac1m}\le 2C_m^{\frac1{m-1}}M^{\frac{m+1}{2m^2}}
\]
for all large enough times.

Combining all the estimates mentioned above we finally get, for $0<x<\delta t^\beta$ and all large enough times,
\[
 \frac{t^{\alpha+\frac\beta m}}{(1+x)^{\frac1 m}}\big(u(x,t)-D_M(x,t)\big)\le C_\delta\ep
  \frac{t^{\alpha+\frac\beta m}V(x,t)}{(1+x)^{\frac1 m}}+\ep
  \le (C_\delta 2C_m^{\frac1{m-1}}M^{\frac{m+1}{2m^2}}+1)\ep.
  \]
\end{proof}

\section{Control from below}
\label{sect-subsolution} \setcounter{equation}{0}

We will now deal with the \lq\lq lower'' part of~\eqref{eq:main.result}. The proof is quite similar to that of the \lq\lq upper'' part. However, in this case, subsolutions are only obtained in sets of the form
$$
A_{a,\delta,T}=\{(x,t): a<x<\delta t^\beta, t\ge T\},
$$
and the points $x\in(0,a)$ have to be treated separately.

The subsolution approaching $D_M$ with the right rate as $t$ goes to infinity will have
the form
\begin{equation}
\label{eq:v}
  v(x,t)= c(t) t^{-\alpha}F_M\Big(\frac {x-a}{t^\beta}\Big),\qquad a>0,
\end{equation}
where $c$ is the solution to the Initial Value Problem
\begin{equation}
\label{eq:c}
tc'(t)=\alpha{(c(t)-c^m(t))},\quad t>T,\qquad c(T)=c_0\in(0,1).
\end{equation}
The function $c$ is well defined for $t\ge T$. It is monotone increasing and $c(t)\nearrow 1$ as $t\to\infty$, as desired.

We start by proving that $v$ is a subsolution to the PME in $A_{a,\delta,T}$ if $\delta$ is small and $T$ is big, no matter the value of $a\in(0,1)$.
\begin{lema}
\label{lem:subsolution}
Let $m>1$ and $M>0$, and let $\bar\delta>0$  be as in Lemma~\ref{lem:supersolution}. For all $a\in(0,1)$, $T>0$, and $c_0\in(0,1)$  the function $v$ given by~\eqref{eq:v}--\eqref{eq:c} satisfies
\begin{equation}
\label{eq:subsolution}
v_t-(v^m)_{xx}\le 0\quad\mbox{in } A_{a,\delta,T} \text{ for all }\delta\in(0,\bar\delta).
\end{equation}
\end{lema}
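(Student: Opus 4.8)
The plan is to repeat, essentially verbatim, the computation in the proof of Lemma~\ref{lem:supersolution}, so I will only point out the (minor) differences. First, observe that~\eqref{eq:c} is literally the same ODE as~\eqref{eq:k}, since it may be written as $tc'(t)=-\alpha(c^m(t)-c(t))$; only the initial datum differs, and it is the choice $c_0\in(0,1)$ that makes $c$ increase to $1$ rather than decrease to it. Setting $\xi=\frac{x-a}{t^\beta}$ and performing the same straightforward computation as before — the sign of the shift $a$ is immaterial, since it enters only through $\xi$, and $\xi_t=-\beta\xi/t$ regardless of that sign — the identity $m\alpha+2\beta=\alpha+1$ combined with the profile equation~\eqref{eq:profile} gives
\[
\big(v_t-(v^m)_{xx}\big)(x,t)=t^{-\alpha-1}\big(c^m(t)-c(t)\big)\big(-\alpha F_M(\xi)-(F_M^m)''(\xi)\big),\qquad x>a,\ t\ge T,
\]
which is exactly the expression found in the proof of Lemma~\ref{lem:supersolution}, now with $c$ in the role of $k$.

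It remains to read off the sign of the right-hand side on $A_{a,\delta,T}$. On the one hand, since $c_0\in(0,1)$ and $c$ is monotone increasing with $c(t)\to1$, one has $0<c(t)<1$ for every $t\ge T$, and hence $c^m(t)-c(t)<0$, because $m>1$. On the other hand, for $(x,t)\in A_{a,\delta,T}$ we have $0<\xi=\frac{x-a}{t^\beta}<\frac{x}{t^\beta}<\delta<\bar\delta<\bar\xi M^{(m-1)/(2m)}$, so~\eqref{eq:positivity.derivative} yields $F_M'(\xi)>0$, and therefore, by~\eqref{eq:profile} once again, $-\alpha F_M(\xi)-(F_M^m)''(\xi)=\beta\xi F_M'(\xi)>0$. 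Since $t^{-\alpha-1}>0$, the product is negative, which is~\eqref{eq:subsolution}. Note that, in contrast with Lemma~\ref{lem:supersolution}, no lower bound on $T$ is needed here: the negative shift $-a$ makes $\xi$ strictly smaller than $x/t^\beta$, so the inequality $\xi<\delta<\bar\delta$ holds automatically for every $t\ge T$ and every $\delta\in(0,\bar\delta)$.

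I do not anticipate any genuine obstacle, the argument being a mirror image of Lemma~\ref{lem:supersolution}; the only point requiring a word of care is the regularity of $F_M$. Indeed, $F_M$ is not $C^1$ at $\xi=0$ — from the explicit form~\eqref{eq-F-moment} one sees that near the origin $F_M$ behaves like a positive multiple of $\xi^{1/m}$ — so a priori $(F_M^m)''(\xi)$ and $\xi F_M'(\xi)$ need not make classical sense there. This is harmless: on $A_{a,\delta,T}$ we only evaluate $F_M$ at points $\xi\in(0,\delta)$ with $\delta<\xi_M$, where $F_M>0$ and the profile is smooth since the equation is non-degenerate; and in any case the combination $\beta\xi F_M'(\xi)$ is to be understood, through~\eqref{eq:profile}, as $-(F_M^m)''(\xi)-\alpha F_M(\xi)$, which extends continuously up to $\xi=0$ (again by~\eqref{eq-F-moment}). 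Hence all the manipulations above are classical on $A_{a,\delta,T}$.
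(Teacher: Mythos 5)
Your proposal is correct and follows essentially the same route as the paper: set $\xi=(x-a)/t^\beta$, use the ODE for $c$ together with the profile equation to reduce $v_t-(v^m)_{xx}$ to $t^{-\alpha-1}(c^m-c)\beta\xi F_M'(\xi)$, and conclude from $0<c<1$ and the positivity of $F_M'$ on $(0,\bar\xi M^{(m-1)/(2m)})$. Your extra remarks — that the negative shift makes any $T>0$ admissible, and that $F_M$ is smooth away from $\xi=0$ so all manipulations are classical on $A_{a,\delta,T}$ — are accurate and a welcome clarification of points the paper leaves implicit.
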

\begin{proof}
Let $\xi=\frac{x-a}{t^\beta}< \frac{x}{t^\beta}$. A computation analogous to the one we did in the proof of Lemma~\ref{lem:supersolution} shows that
\[
\big(v_t-(v^m)_{xx}\big)(x,t)= -
t^{-\alpha-1}(c(t)-c^m(t))\beta\xi F_M'(\xi).
\]
But,
$$
0<\xi=\frac{x-a}{t^\beta}< \frac{x}{t^\beta}<\delta<\bar\delta<\bar\xi M^{(m-1)/(2m)}, \quad (x,t)\in A_{a,\delta,T},
$$
and hence the result follows from~\eqref{eq:positivity.derivative}, since $c(t)<1$ for all times.
\end{proof}

The matching with the outer behavior will require to know that $u$ is positive in some set $A_{\delta,T}$. This is what we prove next.
\begin{lema}
\label{lem:positivity}
Let $u$ be the unique weak solution to \eqref{eq:main}, $M=\int_0^\infty xu_0(x)\,dx$.
Given $\delta\in(0,\xi_M)$, there exists a time $T_{\delta}$ such that $u(x,t)>0$ in~$A_{\delta,T_{\delta}}$.
\end{lema}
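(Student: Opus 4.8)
The plan is to reduce the statement to the fact, already recorded in the Introduction, that the right interface satisfies $s(t):=\sup\{x:u(x,t)>0\}\ge\xi_M t^\beta$ for $t$ large. Since $\delta<\xi_M$, it then suffices to prove that for large $t$ the positivity set of $u(\cdot,t)$ is the \emph{entire} interval $(0,s(t))$ --- i.e. that $u$ is attached to the boundary and has no interior zeros --- because in that case $(0,\delta t^\beta)\subset(0,\xi_M t^\beta)\subset(0,s(t))$, and one takes $T_\delta$ to be a time beyond which both properties hold.

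For the attachment I would pass to the antisymmetric extension $\bar u$ of $u$, which solves the signed PME~\eqref{eq:signed.PME} and consists of a negative bump to the left of $x=0$ and a positive bump to the right. By the intersection--comparison (lap number) principle $\bar u(\cdot,t)$ has at most one sign change for every $t>0$, and antisymmetry pins it at $x=0$, so $\bar u\ge0$ for $x>0$. The left interface of the positive bump and (by antisymmetry) the right interface of the negative one are monotone, so the gap on which $\bar u\equiv0$ has non-increasing width; were it never to close, a one-dimensional porous medium free boundary would stand still for all time, contradicting the finite-waiting-time property (see~\cite{Vazquez-book}), so the bumps merge at a finite time. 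And $u$ cannot detach again for large times: if $u(\cdot,t)$ vanished near $x=0$ for all $t\ge T$, the boundary flux $(u^m)_x(0^+,t)$ would vanish, so $m(t):=\int_0^\infty u(x,t)\,dx$ would be constant on $[T,\infty)$, while the Kamin--Vázquez bounds $u(\cdot,t)\le Ct^{-\alpha}$ and $\mathrm{supp}\,u(\cdot,t)\subset[0,\xi_M t^\beta+o(1)]$ force $m(t)=O(t^{\beta-\alpha})\to0$ (recall $\beta<\alpha$); hence $m\equiv0$ on $[T,\infty)$, i.e. $u\equiv0$ there, contradicting $\int_0^\infty xu\,dx=M>0$. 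Combined with monotonicity of the left interface this gives a time past which $\mathrm{supp}\,u(\cdot,t)=[0,s(t)]$, and the strong maximum principle, applied to $u$ on the open set where it is positive, rules out interior zeros, so $u>0$ on $(0,s(t))$.

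A more self-contained way to finish, which also foreshadows the sharp decay rate of the main theorem, is to slip an explicit subsolution under $u$. The function $w(x,t)=c\,t^{-(\alpha+\beta/m)}x^{1/m}$ is a weak solution of the PME, since $(w^m)_{xx}=(c^m t^{-(m\alpha+\beta)}x)_{xx}=0$, and $w_t<0$, so it is a subsolution; it vanishes on $\{x=0\}$; on the moving boundary $x=\delta t^\beta$ it equals $c\,\delta^{1/m}t^{-\alpha}$, which for small $c$ stays below $u(\delta t^\beta,t)\ge\tfrac12 t^{-\alpha}F_M(\delta)$ for $t$ large by~\eqref{eq:KV.result}; and at a large time $T_1$ at which $u$ is in contact with the boundary with positive flux (hence $u(\cdot,T_1)\ge c_1 x^{1/m}$ near $x=0$ and, by the previous paragraph, $u(\cdot,T_1)>0$ on all of $(0,\delta T_1^\beta)$ since $s(T_1)\ge\xi_M T_1^\beta>\delta T_1^\beta$), one has $w(\cdot,T_1)\le u(\cdot,T_1)$ on $(0,\delta T_1^\beta)$ once $c$ is small. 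Comparison on $A_{\delta,T_1}$ then yields $u\ge w>0$ in $A_{\delta,T_1}$, so $T_\delta=T_1$ works.

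The hardest part will be the attachment step: converting the soft facts (monotone interfaces, at most one sign change, finite waiting time) into the quantitative statement that the support actually reaches $x=0$ and stays there rather than merely approaching it; the identity $\frac{d}{dt}\int_0^\infty u=-(u^m)_x(0^+,t)$ paired with the decay of the mass is the decisive input. The remaining ingredients --- connectedness of the support and absence of interior zeros for large $t$, and the $x^{1/m}$ boundary behaviour of $u$ at a time of contact --- are standard one-dimensional porous medium facts.
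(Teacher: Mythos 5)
Your approach is genuinely different from the paper's and, as written, has gaps that would need to be filled.

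The paper proves the lemma in a much more hands-on way: the far-field convergence~\eqref{eq:KV.result} gives $u\ge Kt^{-\alpha}$ on the annulus $((\delta/2)t^\beta,\delta t^\beta)$ for $t\ge t_\delta$; the retention property (which follows from the Aronson--B\'enilan inequality $u_t\ge -u/((m-1)t)$, valid also for the Dirichlet problem) upgrades this to positivity on the fixed left endpoint; and the stretch $(0,(\delta/2)t_\delta^\beta)$ is filled by sliding a small translated Barenblatt $B(\cdot-x_0,t;C)$ under $u$ at time $t_\delta$, letting it spread until it first touches $x=0$ at a finite time $T_\delta$, and then invoking retention once more. No lap numbers, interface structure, or flux accounting is needed.

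Your first route has the following problems. (i) The assertion that if the interval where $\bar u\equiv0$ never closed then ``a free boundary would stand still for all time, contradicting the finite-waiting-time property'' is not correct: the left interface of the positive bump can be strictly decreasing and still asymptote to a positive limit, and the waiting-time theory says nothing about that scenario. (ii) Your flux/mass identity $\frac{d}{dt}\int_0^\infty u=-(u^m)_x(0^+,t)$ combined with the decay $\int_0^\infty u=O(t^{\beta-\alpha})\to0$ is a nice observation and does show that the support must reach $x=0$ by some finite time and stay there (by retention), but it only controls the \emph{left endpoint} of the positivity set. (iii) The step ``strong maximum principle rules out interior zeros'' does not hold for the degenerate PME: the equation is not uniformly parabolic where $u$ vanishes, and that is precisely why interior holes (and free boundaries) can exist for a while. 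The lap-number argument likewise only limits sign changes; it puts no constraint on the zero set inside each sign region, so it does not give connectedness of $\{u(\cdot,t)>0\}$. To conclude $u>0$ on all of $(0,\delta t^\beta)$ one still needs some argument that fills the interval, which is exactly what the paper's Barenblatt comparison supplies. (iv) Your second, ``self-contained'' finish with the subsolution $w(x,t)=c\,t^{-(\alpha+\beta/m)}x^{1/m}$ is circular at the step ``$u(\cdot,T_1)>0$ on all of $(0,\delta T_1^\beta)$\,'' since that is the conclusion being sought, and the initial ordering $w(\cdot,T_1)\le u(\cdot,T_1)$ also presupposes a positive flux at $x=0$ at time $T_1$, which is a further nondegeneracy statement that has not been established.

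If you want to pursue a flux-based argument, you would still need an additional comparison step (e.g.\ the paper's translated Barenblatt, or some other spreading subsolution placed in an annulus where convergence already gives positivity) to propagate positivity across the whole interval $(0,\delta t^\beta)$; the mass balance alone does not do that. The explicit stationary-in-$x$ subsolution $cx^{1/m}t^{-(\alpha+\beta/m)}$ is appealing and does foreshadow the sharp rate, but it is the right tool only \emph{after} Lemma~\ref{lem:positivity} (and a bit more) is in hand.
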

\begin{proof}
Since $\delta<\xi_M$, the convergence result~\eqref{eq:KV.result} implies that there is a time $t_\delta$ such that $u(x,t)\ge K\,t^{-\alpha}$ for some $K>0$ if $x\in\big((\delta/2) t^\beta,\delta t^\beta\big)$, $t\ge t_\delta$.

We now use that non-negative solutions to~\eqref{eq:main} have the so-called retention property: if $u(x,\bar t)>0$, then $u(x,t)>0$ for all $t\ge \bar t$. This can be proved in several ways, for instance, using that the application $t\mapsto t^{1/(m-1)}u(x,t)$ is non-decreasing. This monotonicity property follows easily from the estimate $u_t\ge-u/((m-1)t)$, which is proved using comparison arguments; see, for instance, \cite{Vazquez-book}. Hence, $u(x,t)>0$ for $x\in ((\delta/2)t_\delta^\beta,\delta t^\beta)$, $t\ge t_\delta$.

It only remains to prove that $u(x,T_\delta)>0$ if $x\in(0,(\delta/2)T_\delta^\beta)$ for some large enough $T_\delta\ge t_\delta$, since the result will then follow from the retention property. The positivity in this fixed interval is achieved by comparison with a suitable translate  of a source type solution of the PME,
$$
B(x,t;C)=t^{-\frac1{m+1}}\Big(C-\kappa_m|\xi|^2\Big)_+^{\frac1{m-1}},\qquad \xi=x/t^{\frac1{m+1}},\qquad C>0,
$$
where the constant $\kappa_m$ is as in~\eqref{eq:constants}. Such solutions are due to Zel'dovi$\check{\rm c}$ and Kompaneets~\cite{Zeldovic-Kompaneec-1950} and Barenblatt~\cite{Barenblatt-1952}. Indeed, take $x_0\in \big((\delta/2) t_\delta^\beta,\delta t_\delta^\beta\big)$. It is easy to check that if $C>0$ is small enough, then
$$
B(x-x_0,t_\delta;C)=0 \quad\text{if }x\not\in\big((\delta/2) t_\delta^\beta,\delta t_\delta^\beta\big),\qquad \sup_{(\delta/2) t_\delta^\beta\le x\le \delta t_\delta^\beta}B(x-x_0,t_\delta;C) \le K t_\delta^{-\alpha}.
$$
Moreover, $B(x-x_0,t;C)$ is a solution to~\eqref{eq:main} until it touches the boundary $x=0$. This will happen in a finite time $T_\delta\ge t_\delta$. Then, comparison yields that $u(x,t)\ge B(x-x_0,t;C)$ for all $t\in[t_\delta,T_\delta]$, and hence the required positivity.
\end{proof}

We now perform the matching with the outer behavior in order to obtain the control from below.
\begin{lema}
\label{lem:control-from-below}
Let $u$ be the unique weak solution to \eqref{eq:main}, $M=\int_0^\infty xu_0(x)\,dx$,  and $\bar\delta$ as in Lemma~\ref{lem:supersolution}.
Given $\ep>0$, $a\in(0,1)$, and $\delta\in(0,\bar\delta)$, there is a time $T_{\ep,a,\delta}>0$  such that for all  $T\ge T_{\ep,a,\delta}$ there is a value $c_0\in(0,1)$ such that the function $v$ given by~\eqref{eq:v}--\eqref{eq:c} satisfies
\begin{equation}
\label{eq:bound-from-below}
u(x,t)\ge(1-C_\delta\ep)v(x,t),\quad (x,t)\in A_{a,\delta,T}, \qquad C_\delta=1/F_M(\delta).
\end{equation}
\end{lema}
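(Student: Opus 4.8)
The plan is to follow the argument of Lemma~\ref{lem:control-from-above} almost verbatim, with the roles of super- and subsolution interchanged. Set $W:=(1-C_\delta\ep)v$. We will check that $W$ is a nonnegative subsolution of the PME in $A_{a,\delta,T}$ and that $u\ge W$ on the parabolic boundary of this set, and then conclude by comparison. We may assume $C_\delta\ep<1$, for otherwise $W\le 0\le u$ and there is nothing to prove. We take $T_{\ep,a,\delta}$ larger than the time $T_\ep$ of~\eqref{eq:outer.epsilon.approx}, larger than the time furnished by Lemma~\ref{lem:positivity} (which applies, with a value slightly larger than $\delta$ so as to include the right endpoint below, since $\delta<\bar\delta<\xi_M$), and larger than $(a/\delta)^{1/\beta}$; we then fix $T\ge T_{\ep,a,\delta}$ and choose $c_0\in(0,1)$ below.

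On the bottom $\{t=T\}$: by Lemma~\ref{lem:positivity}, $u(\cdot,T)$ is continuous and strictly positive on the compact interval $[a,\delta T^\beta]$, hence bounded below there by some $\mu>0$; if $c_0$ is chosen so small that $c_0T^{-\alpha}\|F_M\|_\infty\le\mu$, then $W(\cdot,T)\le v(\cdot,T)\le u(\cdot,T)$ on that interval. On the inner lateral boundary $\{x=a\}$: since $F_M(0)=0$ we have $W(a,t)=0\le u(a,t)$. On the outer lateral boundary $\{x=\delta t^\beta,\ t\ge T\}$: here $(x-a)/t^\beta=\delta-a/t^\beta\in(0,\delta)$, so using that $F_M$ is nondecreasing on $[0,\delta]$ (by~\eqref{eq:positivity.derivative}) and that $c(t)<1$ we get $W(\delta t^\beta,t)\le(1-C_\delta\ep)t^{-\alpha}F_M(\delta)=t^{-\alpha}(F_M(\delta)-\ep)$, while~\eqref{eq:outer.epsilon.approx} gives $u(\delta t^\beta,t)\ge D_M(\delta t^\beta,t)-\ep t^{-\alpha}=t^{-\alpha}(F_M(\delta)-\ep)$; hence $u\ge W$ there as well.

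To see that $W$ is a subsolution in $A_{a,\delta,T}$, recall that $v$ is one by Lemma~\ref{lem:subsolution}, and that — just as for $V$ in the proof of Lemma~\ref{lem:control-from-above} — one has $(v^m)_{xx}=c^m(t)t^{-(\alpha+1)}(F_M^m)''(\xi)<0$ in $A_{a,\delta,T}$, by~\eqref{eq:positivity.derivative} and the positivity of $F_M$ on $(0,\xi_M)$. Writing $\lambda=1-C_\delta\ep\in(0,1)$, so that $\lambda-\lambda^m>0$, we obtain $W_t-(W^m)_{xx}=\lambda\big(v_t-(v^m)_{xx}\big)+(\lambda-\lambda^m)(v^m)_{xx}\le 0$ in $A_{a,\delta,T}$. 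Since $W\ge 0$ is then a subsolution lying below $u$ on the whole parabolic boundary of $A_{a,\delta,T}$, the comparison principle gives $u\ge W=(1-C_\delta\ep)v$ there, which is~\eqref{eq:bound-from-below}.

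The one feature that is genuinely new with respect to the proof of the upper bound is the need to control $u$ from below near the inner boundary $x=a$ at the starting time $t=T$; this is precisely the content of Lemma~\ref{lem:positivity}, and it is why the subsolution is shifted by $-a$ (rather than $+a$, as in the supersolution), so that $W$ vanishes on $\{x=a\}$ and no quantitative lower bound on $u$ is needed there. The other slightly delicate point, exactly as in the upper case, is that the matching on $\{x=\delta t^\beta\}$ has to be sharp: it works only because of the identity $(1-C_\delta\ep)F_M(\delta)=F_M(\delta)-\ep$ together with $c(t)<1$, which leaves just enough room to absorb the $\ep$-error coming from~\eqref{eq:outer.epsilon.approx}.
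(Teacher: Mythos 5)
Your proposal is correct and follows essentially the same route as the paper: a lower bound on $u(\cdot,T)$ from Lemma~\ref{lem:positivity} and a small $c_0$ to order things on the bottom, $W(a,t)=0$ on the inner boundary, the sharp identity $(1-C_\delta\ep)F_M(\delta)=F_M(\delta)-\ep$ together with $c(t)<1$ to match~\eqref{eq:outer.epsilon.approx} on the outer boundary $x=\delta t^\beta$, and the convexity-type computation $W_t-(W^m)_{xx}=\lambda(v_t-(v^m)_{xx})+(\lambda-\lambda^m)(v^m)_{xx}\le0$ with $\lambda=1-C_\delta\ep$ to show $W$ is a subsolution. (Incidentally, your $(v^m)_{xx}=c^m(t)t^{-(\alpha+1)}(F_M^m)''(\xi)$ corrects a harmless typo in the paper, which writes $c(t)$ instead of $c^m(t)$ in the corresponding line.)
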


\begin{proof}
Let $\delta\in(0,\bar\delta)$ and $a\in(0,1)$. Note that $\bar\delta<\xi_M$. The convergence result~\eqref{eq:KV.result} implies that, given $\ep>0$,  there exists a value $T_{a,\ep,\delta}\ge\max\{T_{\bar\delta},(a/\delta)^{1/\beta}\}$  such that~\eqref{eq:outer.epsilon.approx} holds with $T_\ep=T_{\ep,a,\delta}$.

Let $T\ge T_{\ep,a,\delta}$. We know from Lemma~\ref{lem:positivity} that there is a constant $\kappa=\kappa(a,\delta,T)$ such that
$u(x,T)\ge \kappa$ if $a<x<\delta T^{\beta}$. Take now $c_0\in(0,1)$ small so that $c_0 T^{-\alpha}F_M(\delta)\le \kappa$. With this choice of $T$ and $c_0$, we define $v$ by~\eqref{eq:v}--\eqref{eq:c}. Using the monotonicity property~\eqref{eq:positivity.derivative}, we get
\[
u(x,T)\ge c_0 T^{-\alpha}F_M\Big(\frac{x-a}{T}\Big)=v(x,T),\qquad a<x<\delta T^\beta.
\]
On the other hand, since $T\ge T_{\ep,a,\delta}$, the convergence result~\eqref{eq:KV.result} together with the selfsimilar form of $D_M$, formula~\eqref{eq:selfsimilar.form.dipole} yield
\[
u(x,t)\ge -\ep t^{-\alpha}+D_M(x,t)=\big(1-F_M(\delta)^{-1}\ep\big)D_M(x,t),\qquad x=\delta t^{\beta},\ t\ge T.
\]

We notice now that $c(t)<1$. Therefore, \eqref{eq:positivity.derivative} implies that $D_M\ge v$ in $A_{\delta, T}$, and we conclude that, for $\ep<F_M(\delta)$ and $C_\delta=1/F_M(\delta)$,
\[
u(x,t)\ge \underbrace{(1-C_\delta\ep)v(x,t)}_{w(x,t)},\qquad x=\delta t^\beta,\ t\ge T.
\]

We now observe that $w$ is a subsolution to the PME in $A_{a,\delta,T}$. Indeed, in that set we have $(v^m)_{xx}(x,t)=-c(t)t^{-(\alpha+1)}\big(\alpha F_M(\xi)+\beta\xi F_M'(\xi)\big)<0$, and hence Lemma~\ref{lem:subsolution} implies that
\[
w_t-(w^m)_{xx}=
(1-C_\delta\ep)\big(v_t-(v^m)_{xx}\big)-\big((1-C_\delta\ep)^m-(1-C_\delta\ep)\big)(v^m)_{xx}\le0\quad\mbox{in }A_{a,\delta,T}.
\]

We finally notice that $w(a,t)=0$ for all $t> T$. Therefore, a comparison argument allows to conclude that~\eqref{eq:bound-from-below} holds.
\end{proof}

The next step is to control the difference between $v$ and $D_M$ for large times.
\begin{lema}
Given $m>1$, $M>0$, and $\ep>0$, let $\hat\delta$ and $a_\ep\in(0,1]$ be as in Lemma~\ref{lema-D-V}. Then, for all $\delta\in(0,\hat\delta)$,  $T>0$, and $a\in(0,a_\ep)$, the function $v$ given by~\eqref{eq:v}--\eqref{eq:c} satisfies
\begin{equation}
\label{eq:difference.D.v}
\frac{t^{\alpha+\frac\beta m}}{(1+x)^{\frac1 m}} \big|D_M(x,t)-v(x,t)\big|<\ep\quad\mbox{in }A_{a,\delta,\widetilde T_\ep}\text{ for some }\widetilde T_\ep\ge T.
\end{equation}
\end{lema}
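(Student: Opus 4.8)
The plan is to mirror, almost verbatim, the estimate of the analogous quantity for the supersolution in Lemma~\ref{lema-D-V}. Write
\[
\frac{t^{\alpha+\frac\beta m}}{(1+x)^{\frac1 m}}\big|D_M(x,t)-v(x,t)\big|\le
\underbrace{\frac{t^{\frac\beta m}}{(1+x)^{\frac1 m}}\Big|F_M\Big(\frac{x}{t^\beta}\Big)-F_M\Big(\frac{x-a}{t^\beta}\Big)\Big|}_{\mathrm{I}}
+\underbrace{\frac{t^{\frac\beta m}}{(1+x)^{\frac1 m}}F_M\Big(\frac{x-a}{t^\beta}\Big)\big|1-c(t)\big|}_{\mathrm{II}},
\]
valid on $A_{a,\delta,T}$, exactly as in the proof of Lemma~\ref{lema-D-V} but with the translation shifted to $x-a$ and the factor $k(t)-1$ replaced by $1-c(t)$ (note $c(t)<1$, so the absolute value is $1-c(t)$).

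For term $\mathrm{I}$, I would use the mean value representation $F_M(x/t^\beta)-F_M((x-a)/t^\beta)=\int_0^1 F_M'\big((x-a+sa)/t^\beta\big)\frac{a}{t^\beta}\,ds$, which is legitimate because on $A_{a,\delta,\widehat T}$ all the arguments $(x-a+sa)/t^\beta$ lie in $\big(0,\hat\xi M^{(m-1)/(2m)}\big)$, so that the bound $\xi F_1'(\xi)\le K\xi^{1/m}$ from Lemma~\ref{lema-D-V} applies after the same rescaling $F_M(\xi)=M^{1/m}F_1(\xi/M^{(m-1)/(2m)})$. Since $x-a+sa=x-(1-s)a\le x+1$ and $a<1$, I get, just as before,
\[
\mathrm{I}\le K\int_0^1 (x-a+sa)^{\frac1m-1}\,a\,ds\le mK\,a^{\frac1m},
\]
so $\mathrm{I}<\ep/2$ provided $a<a_\ep=\min\{(\ep/(2mK))^m,1\}$, which is precisely the constant already fixed in Lemma~\ref{lema-D-V}. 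For term $\mathrm{II}$ I would use the upper bound~\eqref{eq:estimate.profile.above} on $F_M$ to write $\mathrm{II}\le C_m^{1/(m-1)}M^{(m+1)/(2m^2)}\big((x-a)/(1+x)\big)^{1/m}\big(1-c(t)\big)\le C_m^{1/(m-1)}M^{(m+1)/(2m^2)}\big(1-c(t)\big)$, using $x-a<x+1$. Since $c(t)\nearrow1$ as $t\to\infty$ by~\eqref{eq:c}, there is $\widetilde T_\ep\ge T$ with $1-c(t)$ so small that $\mathrm{II}<\ep/2$ for $t\ge\widetilde T_\ep$; adding the two bounds gives~\eqref{eq:difference.D.v}.

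There is essentially no obstacle here: this lemma is the exact lower-side counterpart of Lemma~\ref{lema-D-V}, and the only points requiring a little care are checking that the shifted arguments stay in the interval where the one-sided estimate on $\xi F_1'(\xi)$ holds (guaranteed by restricting to $\delta<\hat\delta$ and $t\ge\widehat T$ as in Lemma~\ref{lema-D-V}), and replacing $k(t)-1\searrow0$ by $1-c(t)\searrow0$. Both the constant $a_\ep$ and the smallness threshold $\hat\delta$ are inherited unchanged from Lemma~\ref{lema-D-V}, and no new dependence on $a$, $\delta$, or $T$ enters beyond the convergence $c(t)\to1$.
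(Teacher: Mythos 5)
Your proposal is correct and follows the paper's proof essentially verbatim: the same two-term decomposition into the translation error I and the $|1-c(t)|$ error II, the same mean-value estimate using $\xi F_1'(\xi)\le K\xi^{1/m}$ to bound I by $mKa^{1/m}<\ep/2$, and the same use of~\eqref{eq:estimate.profile.above} together with $c(t)\nearrow1$ to make II $<\ep/2$ for $t\ge\widetilde T_\ep$. The only cosmetic difference is your parametrization $x-a+sa$ versus the paper's $x-sa$ (equivalent after $s\mapsto1-s$), and you impose $t\ge\widehat T$ which is in fact unnecessary here since $(x-sa)/t^\beta\le x/t^\beta<\delta<\hat\delta$ already, but this is harmless.
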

\begin{proof} Let $x\in(a,\delta t^\beta)$ with $\delta<\hat\delta$. Arguing as in the proof of Lemma~\ref{lema-D-V} we get
\[\begin{aligned}
\frac{t^{\alpha+\frac\beta m}}{(1+x)^{\frac1 m}} & \Big|t^{-\alpha}F_M\Big(\frac{x-a}{t^\beta}\Big)-D_M(x,t)\Big|\le
\frac{t^{\frac\beta m}}{(1+x)^{\frac1 m}}\int_0^1F_M'\Big(\frac{x-sa}{t^\beta}\Big)\frac{x-sa}{t^\beta}\frac a{x-sa}\,ds\\
&\le K\int_0^1(x-sa)^{\frac1m-1}a\,ds\le mK a^{\frac1m}<\frac\ep2
\end{aligned}
\]
if $a<a_\ep:=\min\{(\ep/(2mK))^m,1\}$.

On the other hand, using~\eqref{eq:estimate.profile.above},
\[
\begin{aligned}
\frac{t^{\alpha+\frac\beta m}}{(1+x)^{\frac1 m}} & \Big|t^{-\alpha}F_M\Big(\frac{x-a}{t^\beta}\Big)-v(x,t)\Big|=\frac{t^{\frac\beta m}}{(1+x)^{\frac1 m}}F_M\Big(\frac{x-a}{t^\beta}\Big)|1-c(t)|\\
&\le C_m^{\frac1{m-1}}M^{\frac{m+1}{2m^2}}\left(\frac{x-a}{1+x}\right)^{\frac1m}|1-c(t)|
\le C_m^{\frac1{m-1}}M^{\frac{m+1}{2m^2}}|1-c(t)|<\frac\ep2
\end{aligned}
\]
if $t\ge \widetilde T_\ep$ for some large enough $\widetilde T_\ep\ge T$, since $c(t)\to1$ as $t\to\infty$.

The combination of the above two estimates yields the result.
\end{proof}

We now have  all the tools we need to prove the lower limit. Here a difference arises with respect to the upper limit: we will have to treat separately the limit in sets of the form $(0,a)$ with $a$ small.  This is done using that both $u$ and $D_M$ are small in that set for large times.
\begin{prop}
Let $u$ be the unique weak solution to~\eqref{eq:main}, and $D_M$ be the unique dipole solution to the PME with first moment $M=\int_0^\infty xu_0(x)\,dx$. If $\hat\delta$ is the constant given by Lemma~\ref{lema-D-V}, then, for all $\delta\in (0,\hat\delta)$,
\begin{equation}
\label{eq:lim-inf}
\liminf_{t\to\infty} t^{\alpha+\frac\beta m}\sup_{0<x<\delta t^\beta}\frac{\big(u(x,t)-D_M(x,t)\big)}{(1+x)^{\frac1 m}}\ge0.
\end{equation}
\end{prop}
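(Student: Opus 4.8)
The plan is to argue exactly as for the upper limit, replacing the supersolution $V$ by the subsolution $v$ of~\eqref{eq:v}--\eqref{eq:c}, with one extra difficulty: since $v$ is a subsolution only on sets of the form $A_{a,\delta,T}$ and vanishes at $x=a$, it gives no information on the inner strip $\{0<x<a\}$, which will have to be handled directly. So fix $\ep>0$, which we may assume small enough that $\ep<F_M(\delta)$ (hence $C_\delta\ep<1$), and fix $\delta\in(0,\hat\delta)$. Pick $a\in(0,a_\ep)$, small enough in addition that $C_m^{1/(m-1)}M^{(m+1)/(2m^2)}\,a^{1/m}<\ep$. Applying Lemma~\ref{lem:control-from-below} with these $\ep$, $a$, $\delta$ gives a time $T_{\ep,a,\delta}$; I would then fix $T\ge T_{\ep,a,\delta}$ together with the corresponding $c_0\in(0,1)$ and let $v$ be given by~\eqref{eq:v}--\eqref{eq:c}. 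Then \eqref{eq:bound-from-below} holds in $A_{a,\delta,T}$, and, applying \eqref{eq:difference.D.v} to this same $v$, there is a time $\widetilde T_\ep\ge T$ such that $t^{\alpha+\frac\beta m}(1+x)^{-1/m}\big|D_M(x,t)-v(x,t)\big|<\ep$ in $A_{a,\delta,\widetilde T_\ep}$.

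Next I would estimate $u-D_M$ on the outer strip $a<x<\delta t^\beta$, $t\ge\widetilde T_\ep$. From \eqref{eq:bound-from-below}, $u-D_M\ge(1-C_\delta\ep)v-D_M=(v-D_M)-C_\delta\ep\,v$ (note $v\ge0$); dividing by the weight $t^{-\alpha-\frac\beta m}(1+x)^{1/m}$, the contribution of $v-D_M$ is at least $-\ep$ by \eqref{eq:difference.D.v}, while for the other contribution the profile bound \eqref{eq:estimate.profile.above}, together with $c(t)<1$ and $a<1$, gives
\[
\frac{t^{\alpha+\frac\beta m}v(x,t)}{(1+x)^{1/m}}=\frac{c(t)\,t^{\frac\beta m}}{(1+x)^{1/m}}F_M\Big(\frac{x-a}{t^\beta}\Big)\le C_m^{\frac1{m-1}}M^{\frac{m+1}{2m^2}}\Big(\frac{x-a}{1+x}\Big)^{1/m}\le C_m^{\frac1{m-1}}M^{\frac{m+1}{2m^2}}.
\]
Hence, on this strip,
\[
\frac{t^{\alpha+\frac\beta m}}{(1+x)^{1/m}}\big(u(x,t)-D_M(x,t)\big)\ge-\Big(1+C_\delta\,C_m^{\frac1{m-1}}M^{\frac{m+1}{2m^2}}\Big)\ep.
\]

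For the inner strip $0<x<a$ I would use only that $u\ge0$ and that, by \eqref{eq:estimate.profile.above}, $D_M(x,t)=t^{-\alpha}F_M(x/t^\beta)\le C_m^{1/(m-1)}M^{(m+1)/(2m^2)}\,t^{-\alpha-\frac\beta m}x^{1/m}$, so that
\[
\frac{t^{\alpha+\frac\beta m}}{(1+x)^{1/m}}\big(u(x,t)-D_M(x,t)\big)\ge-C_m^{\frac1{m-1}}M^{\frac{m+1}{2m^2}}\Big(\frac{x}{1+x}\Big)^{1/m}\ge-C_m^{\frac1{m-1}}M^{\frac{m+1}{2m^2}}\,a^{1/m}>-\ep,
\]
by the choice of $a$. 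Putting the two strips together, there is a constant $C=C(m,M,\delta)$ such that, for every $t\ge\widetilde T_\ep$ and every $x\in(0,\delta t^\beta)$,
\[
\frac{t^{\alpha+\frac\beta m}}{(1+x)^{1/m}}\big(u(x,t)-D_M(x,t)\big)\ge-C\ep,
\]
whence, since $t^{\alpha+\frac\beta m}$ is independent of $x$, $t^{\alpha+\frac\beta m}\sup_{0<x<\delta t^\beta}\frac{u(x,t)-D_M(x,t)}{(1+x)^{1/m}}\ge-C\ep$ for all such $t$, so the $\liminf$ in \eqref{eq:lim-inf} is at least $-C\ep$. Letting $\ep\to0$ gives \eqref{eq:lim-inf}.

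The part that is genuinely new compared with the upper-limit argument is the inner strip: there the subsolution degenerates and one instead exploits that $D_M$ itself is small compared to the weight $t^{-\alpha-\frac\beta m}(1+x)^{1/m}$ uniformly on $\{0<x<a\}$, which is immediate from \eqref{eq:estimate.profile.above} because $a$ may be chosen of order $\ep^m$. I expect this to be the only real point; the rest is bookkeeping to make the time thresholds from Lemma~\ref{lem:control-from-below} and from \eqref{eq:difference.D.v} compatible, which they are since the latter produces a threshold above the starting time $T\ge T_{\ep,a,\delta}$.
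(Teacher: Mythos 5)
Your proof is correct, and it follows the paper on the outer strip $a<x<\delta t^\beta$ essentially verbatim (subsolution comparison from Lemma~\ref{lem:control-from-below} plus the $v$--$D_M$ closeness of~\eqref{eq:difference.D.v} plus the profile bound~\eqref{eq:estimate.profile.above}). The genuine divergence is on the inner strip $0<x<a$, where the paper does something heavier: it re-invokes the entire supersolution machinery of Section~\ref{sect-supersolution} (Lemma~\ref{lem:control-from-above} and the function $V$, with $k(t)\le 2$ for large $t$) to bound $u$ from above, and then bounds $D_M$ from above, so as to conclude the two-sided estimate $t^{\alpha+\frac\beta m}\sup_{0<x<a}|u-D_M|/(1+x)^{1/m}\le\ep$. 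You instead observe that only the one-sided estimate $u-D_M\ge-\ep\,t^{-\alpha-\frac\beta m}(1+x)^{1/m}$ is needed, and that it follows from $u\ge 0$ together with $D_M(x,t)\le C_m^{1/(m-1)}M^{(m+1)/(2m^2)}\,t^{-\alpha-\frac\beta m}x^{1/m}$, a direct consequence of~\eqref{eq:estimate.profile.above}, by choosing $a$ with $C_m^{1/(m-1)}M^{(m+1)/(2m^2)}a^{1/m}<\ep$. This is a clean simplification: it decouples the proof of the lower limit from Section~\ref{sect-supersolution} entirely, eliminating the need for the threshold $T_\ep$, the value $k_0$, the auxiliary time $\check T$, and the constraint $C_\delta\ep<1$ (which you still impose so that~\eqref{eq:bound-from-below} is nontrivial, which is fine). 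Incidentally your outer-strip bound on $t^{\alpha+\frac\beta m}v/(1+x)^{1/m}$ drops the paper's redundant factor of $2$, since $c(t)<1$ rather than merely $c(t)\to1$.

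One small remark, not a flaw in your argument but a mismatch you inherited from the statement: you establish the pointwise bound $\frac{t^{\alpha+\frac\beta m}}{(1+x)^{1/m}}\big(u(x,t)-D_M(x,t)\big)\ge-C\ep$ for all $x\in(0,\delta t^\beta)$ and $t$ large, which is the bound actually needed (together with the upper-limit proposition) to prove the theorem. Deducing $\sup_x\ge -C\ep$ from this is trivial, as you note; the quantity that carries the content is really $\inf_x$, and the displayed $\sup$ in~\eqref{eq:lim-inf} is almost certainly a typo in the paper. Your proof, like the paper's, in fact delivers the stronger $\inf$ statement.
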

\begin{proof}
Given $\delta\in(0,\hat\delta)$ and $\ep\in(0,F_M(\delta))$, we choose a small value $a\in(0,a_\ep)$,
with $a_\ep\in(0,1]$ as in Lemma~\ref{lema-D-V}. We will precise how small it has to be later on. We take $T\ge\max\{ T_\ep,T_{\ep,a,\delta}\}$, with $T_\ep$ as in Lemma~\ref{lem:control-from-above}, and $T_{\ep,a,\delta}$ as in Lemma~\ref{lem:control-from-below}, and then $c_0\in(0,1)$ small enough so that the function $v$ defined by~\eqref{eq:v}--\eqref{eq:c} satisfies~\eqref{eq:bound-from-below} and~\eqref{eq:difference.D.v}.


By Lemma~\ref{lem:control-from-above}, we know that there is a value $k_0\ge1$ such that the function $V$ defined by~\eqref{eq:V}--\eqref{eq:k} satisfies~\eqref{eq:upper.estimate}. Besides, since $k(t)\to1$ as $t\to\infty$, there exists a time $\check{T}\ge T$ such that $k(t)\le 2$ for all $t\ge\check{T}$. Therefore, since $C_\delta\ep<1$, using~\eqref{eq:estimate.profile.above} we get
\[\begin{aligned}
\frac{t^{\alpha+\frac\beta m}u(x,t)}{(1+x)^{\frac1 m}}&\le (1+C_\delta\ep)\frac{t^{\alpha+\frac\beta m}V(x,t)}{(1+x)^{\frac1 m}}\\
&\le2(1+C_\delta\ep)\frac{t^{\frac\beta m}}{(1+x)^{\frac1 m}}F_M\Big(\frac{x+a}{t^{\beta}}\Big)\le 4C_m^{\frac1{m-1}}M^{\frac{m+1}{2m^2}} (2a)^{\frac1m}<\frac\ep2,
\end{aligned}
\]
if $a$ is small enough. On the other hand, using again~\eqref{eq:estimate.profile.above},
\[\begin{aligned}
\frac{t^{\alpha+\frac\beta m}D_M(x,t)}{(1+x)^{\frac1 m}}&=
\frac{t^{\frac\beta m}}{(1+x)^{\frac1 m}}F_M\Big(\frac x{t^\beta}\Big)\le C_m^{\frac1{m-1}}M^{\frac{m+1}{2m^2}} a^\frac1m<\frac\ep2.
\end{aligned}
\]
We conclude that
$$
 t^{\alpha+\frac\beta m}\sup_{0<x<a}\frac{\big|u(x,t)-D_M(x,t)\big|}{(1+x)^{\frac1 m}}\le\ep\quad\mbox{if $t$ is large enough}.
$$

We now consider the set $a<x<\delta t^\beta$. Since
$c(t)\to1$ as $t\to\infty$, using~\eqref{eq:estimate.profile.above} we get
\[
\frac{t^{\alpha+\frac\beta m}v(x,t)}{(1+x)^{\frac1 m}}=\frac{c(t)t^{\frac\beta m}}{(1+x)^{\frac1 m}}F_M\Big(\frac{x-a}{t^\beta}\Big)\le 2 C_m^{\frac1{m-1}}M^{\frac{m+1}{2m^2}}\Big(\frac{x-a}{x+1}\Big)^{\frac1m}\le 2C_m^{\frac1{m-1}}M^{\frac{m+1}{2m^2}}
\]
for all large enough times. Combining this estimate with~\eqref{eq:bound-from-below} and~\eqref{eq:difference.D.v}, we finally get, for $a<x<\delta t^\beta$ and all large enough times,
\[
 \frac{t^{\alpha+\frac\beta m}}{(1+x)^{\frac1 m}}\big(u(x,t)-D_M(x,t)\big)\ge- C_\delta\ep
  \frac{t^{\alpha+\frac\beta m}v(x,t)}{(1+x)^{\frac1 m}}-\ep
  \ge -(C_\delta 2C_m^{\frac1{m-1}}M^{\frac{m+1}{2m^2}}+1)\ep.
  \]
\end{proof}



\begin{thebibliography}{CEQW2}

\bibitem{Barenblatt-1952} Barenblatt, G.\,I. \emph{On some unsteady motions of a liquid and gas in a porous medium}. (Russian) Akad. Nauk SSSR. Prikl. Mat. Meh. 16 (1952), no.\,1, 67--78.

\bibitem{Barenblatt-Zeldovich-1957}  Barenblatt, G.\,I.; Zel'dovic, Ya.\,B. \emph{On dipole solutions in problems of non-stationary filtration
of gas under polytropic regime}. (Russian) Prikl. Mat. Mekh. 21 (1957), no.\,5, 718–-720.




\bibitem{CEQW2} Cort\'{a}zar, C.; Elgueta, M.; Quir\'{o}s, F.; Wolanski, N.
\emph{Asymptotic behavior for a nonlocal diffusion
equation on the half line}. Discrete Contin. Dyn. Syst. 35 (2015),  no.\,4, 1391--1407.

\bibitem{Esteban-Vazquez-1988} Esteban, J.\,R.; V\'azquez, J.\,L. \emph{Homogeneous diffusion in R with power-like nonlinear diffusivity}. Arch. Rational Mech. Anal. 103 (1988), no.\,1, 39--80.

\bibitem{Gilding-Peletier-1976} Gilding, B.\,H.; Peletier, L.\,A. \emph{On a class of similarity solutions of the porous media equation}. J. Math. Anal. Appl. 55 (1976), no.\,2, 351--364.

\bibitem{Gilding-Peletier-1977} Gilding, B.\,H.; Peletier, L.\,A. \emph{On a class of similarity solutions of the porous media equation}. II. J. Math. Anal. Appl. 57 (1977), no.\,3, 522--538.

\bibitem{Kamin-Vazquez-1991} Kamin, S.; V\'azquez, J.\,L. \emph{Asymptotic behaviour of solutions of the porous medium equation with changing sign}. SIAM J. Math. Anal. 22 (1991), no.\,1, 34--45.

\bibitem{Vazquez-book} V\'{a}zquez, J.~L.
\lq\lq The porous medium equation. Mathematical theory''.    Oxford
Mathematical Monographs. The Clarendon Press, Oxford University
Press, Oxford, 2007. ISBN: 978-0-19-856903-9.

\bibitem{Zeldovic-Kompaneec-1950} Zel'dovi$\check{\rm c}$, Ya.\,B.; Kompaneets, A.\,S. \emph{On the theory of propagation of heat with the heat conductivity depending upon the temperature}. Collection in honor of the seventieth birthday of academician A.\,F.\,Ioffe, pp.\,61--71. Izdat. Akad. Nauk SSSR, Moscow, 1950.

\end{thebibliography}
\end{document}